\documentclass[11pt,a4paper]{article}
\usepackage{amsthm,amsmath,amssymb}
\usepackage{tikz,xcolor,graphicx}
\usepackage{mathtools}
\usepackage{enumitem}
\usepackage{booktabs}
\usepackage{float}
\usepackage{url}
\usepackage{aligned-overset}
\usepackage[T1]{fontenc}

\newcounter{localclaim}

\usepackage{caption}

\usepackage{booktabs}

\usetikzlibrary{calc}
\usetikzlibrary{positioning}

\usepackage{graphicx}
\usepackage{subfig}

\usepackage{hyperref}

\usepackage[noabbrev,capitalize]{cleveref}

\crefname{equation}{}{}

\newtheorem{theorem}{Theorem}[section]

\newtheorem{lemma}{Lemma}[section]

\theoremstyle{definition}
\newtheorem{definition}[theorem]{Definition}

\newtheorem{claimthm}{Claim}[section]
\newenvironment{claimproof}
  {\proof}
  {\endproof}
  {\endproof}

\newcommand{\floor}[1]{\left\lfloor #1 \right\rfloor}
\newcommand{\ceil}[1]{\left\lceil #1 \right\rceil}

\makeatletter
\newcommand*\bigcdot{\mathpalette\bigcdot@{.5}}
\newcommand*\bigcdot@[2]{\mathbin{\vcenter{\hbox{\scalebox{#2}{$\m@th#1\bullet$}}}}}
\makeatother

\DeclareMathOperator{\ex}{ex}

\begin{document}

    \title{The Tur\'{a}n number of the Cartesian product of a star and an edge
    }
    \author{
    Xiamiao Zhao\thanks{Department of Mathematical Science, Tsinghua University, Beijing 100084, China. Email: zxm23@mails.tsinghua.edu.cn} \qquad
    Xin Cheng\thanks{Corresponding author. School of Mathematics and Statistics, Northwestern Polytechnical University and Xi'an-Budapest Joint Research Center for Combinatorics, Xi'an 710129, Shaanxi, P.R. China. Email: xincheng@mail.nwpu.edu.cn.} \qquad
    Cheng Chi\thanks{School of Mathematical Sciences, Shanghai Jiao Tong University, 800 Dongchuan Road, Shanghai 200240, China. Email: chengchi@sjtu.edu.cn.} \qquad
    Ervin Gy\H{o}ri\thanks{Alfr\'{e}d R\'{e}nyi Institute of Mathematics, Budapest, Hungary. Email: gyori.ervin@renyi.hu} \qquad
    Casey Tompkins\thanks{Alfr\'{e}d R\'{e}nyi Institute of Mathematics, Budapest, Hungary. Email: tompkins.casey@renyi.hu.} \\
    Yichen Wang\thanks{Department of Mathematical Science, Tsinghua University, Beijing 100084, China. Email: wangyich22@mails.tsinghua.edu.cn}}
    \date{}
    \maketitle

    \begin{abstract}
        
        Let $C_k$ denote the cycle of length $k$, $S_t$ be a star with $t$ edges. And
        let $B_t$ be the graph consisting of $t$ copies of $C_4$ sharing one fixed edge.  Equivalently, $B_t=K_2 \mathbin{\square} S_t$, which is the Cartesian product of a star with $t$ edges and an edge.
        Recently, Gao, Janzer, Liu and Xu [\textit{Israel J. Math. 269(2025)}] proved that the Tur\'an number of $K_2\mathbin{\square} C_{2l}$ is $\Theta(n^{\frac{3}{2}})$ for every $l\ge 4$.

        In this paper, we obtain upper and lower estimates for the Tur\'an number of $B_t$ in both the general and bipartite settings for every $t\geq 2$.
        For the lower bound, we use random construction based on the extremal structure of $C_4$.
        These results imply that
        $\frac{1}{2\sqrt{2}}\leq \lim_{t\to \infty} \frac{\ex(n,B_t)}{\sqrt{t}}\leq \frac{1}{2}$, and $\frac{1}{4}\leq \lim_{t\to \infty} \frac{\ex_{bip}(n,B_t)}{\sqrt{t}}\leq \frac{1}{2\sqrt{2}}.$
        In the case of $B_2$, we obtain sharper estimates.
        We show that the Tur\'an number of $B_2$ is approximately between $(0.518+o(1))n^{\frac{3}{2}}$ and $(0.603+o(1))n^{\frac{3}{2}}$.
        And in the bipartite setting, it is approximately between
        $(0.385+o(1))n^{\frac{3}{2}}$ and $(0.468+o(1))n^{\frac{3}{2}}$.
        Moreover,
        in the bipartite setting, we give a more general result, which shows that for every tree $T$ with $t$ edges, the bipartite Tur\'an number of $K_2\mathbin{\square}T$ is at most $\frac{\sqrt{t}}{2\sqrt{2}}(1+o(1))n^{\frac{3}{2}}$.
    \end{abstract}


    \section{Introduction}\label{sec:intro}

        For a given graph $F$, a graph $G$ is said to be {\it $F$-free} if it does not contain $F$ as a subgraph.
        A classical problem in extremal combinatorics is to determine the maximum number of edges in an $F$-free graph on $n$ vertices.
        The maximum number of edges is called the \emph{Tur\'{a}n number} for $F$ and is denoted by $\ex(n, F)$.
        Let $K_p$ denote the complete graph on $p$ vertices.
        The study of Tur\'an-type problems dates back to the early 20th century, when Mantel \cite{mantel1907} determined the Tur\'an number of $K_3$.
        Tur\'an \cite{TuranEgy1941} generalized this result by determining $\ex(n,K_p)$ for all $n$ and $p$.
        Later, Erd\H{o}s, Stone and Simonovits~\cite{ErdosA1966, ErdosOn1946} proved that $\ex(n,F)$ is asymptotically determined by the chromatic number $\chi(F)$ of $F$, when $\chi(F)\ge 3$.  Specifically they proved
        \begin{equation}\label{eq:ess}
            \ex(n,F)=\frac{1}{2}\left(1-\frac{1}{\chi(F)-1}\right)n^2 + o(n^2).
        \end{equation}
        However~\eqref{eq:ess} only shows that $\ex(n, F) = o(n^2)$ when $F$ is bipartite.
        In general it is difficult to determine the exact value, or even the order of magnitude, of the Tur\'an number of a bipartite graph.
        Indeed, there are relatively few bipartite graphs for which the order of magnitude of the Tur\'{a}n number is known.
        Two fundamental classes of bipartite graphs that have been extensively studied are even cycles and complete bipartite graphs.
        For the even cycle $C_{2k}$, Bondy and Simonovits \cite{bondy1974cycles} gave an upper bound $\ex(n, C_{2k}) = O(n^{1+\frac{1}{k}})$.
        For the complete bipartite graphs $K_{s,t}$, a well-known result of K\H{o}v\'{a}ri, R\'{e}nyi and S\'{o}s \cite{kst} showed that $\ex(n, K_{s,t}) \leq \frac{(t-1)^{\frac{1}{s}}}{2}(1+o(1))n^{2-\frac{1}{s}}$ for $s \leq t$ when $n$ is sufficiently large.
        We refer readers who are interested in the Tur\'an number of bipartite graphs to the survey of F\"{u}redi and Simonovits~\cite{FurediThe2013}.

        Let $S_t$ denote the star with $t$ edges.
        The {\it Cartesian product} of $G$ and $H$, denoted by $G \mathbin{\square} H$, is the graph whose vertex set is $\{(g,h): g\in V(G), h\in V(H)\}$ where two vertices $(g_1,h_1)$ and $(g_2,h_2)$ are adjacent if and only if $g_1=g_2$ and $h_1h_2\in E(H)$ or $h_1=h_2$ and $g_1g_2\in E(G)$.

        \begin{figure}
            \centering
            \begin{tikzpicture}
                \filldraw (0,0) circle (.1) node[left] {$u'$};
                \filldraw (1,0) circle (.1) node[right] {$v_1'$}; 
                \filldraw (2,0) circle (.1) node[right] {$v_2'$};
                \filldraw (3,0) circle (.1) node[right] {$v_t'$};
                \filldraw (0,1) circle (.1) node[left] {$u$};
                \filldraw (1,1) circle (.1) node[right] {$v_1$};
                \filldraw (2,1) circle (.1) node[right] {$v_2$};
                \filldraw (3,1) circle (.1) node[right] {$v_t$};
                \filldraw (2.25,0.5) circle (.01);
                \filldraw (2.5,0.5) circle (.01);
                \filldraw (2.75,0.5) circle (.01);
    
                \draw (0,0)--(0,1);
                \draw (0,1)--(1,1);
                \draw (1,1)--(1,0);
                \draw (1,0)--(0,0);
    
                \draw (2,0)--(2,1);
                \draw (3,0)--(3,1);

                \draw (0,0)..controls (0.5,-0.35) and (1.5,-0.35)..(2,0);
                \draw (0,0)..controls (1,-0.7) and (2,-0.7)..(3,0);
                \draw (0,1)..controls (0.5,1.35) and (1.5,1.35)..(2,1);
                \draw (0,1)..controls (1,1.7) and (2,1.7)..(3,1);
            \end{tikzpicture}
            \caption{The graph $B_t = S_t \mathbin{\square} K_2$. The graph can be viewed as $t$ copies of $C_4$ sharing a common edge.}
            \label{fig:Bt}
        \end{figure}
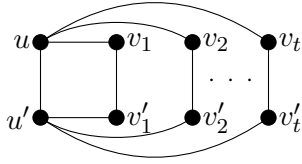
                 Tur\'an-type problems for the Cartesian product of two bipartite graphs, particularly two trees, have attracted much attention.
        Let $P_t$ be the path with $t$ vertices.
        Brada\v{c}, Janzer, Sudakov and Tomon \cite{bradavc2023turan} prove that there exist positive real numbers $c= c(t)$ and $C = C(t)$ such that
        \begin{equation*}
            cn^\frac{3}{2} \leq \ex(n, P_t \mathbin{\square} P_t) \leq Cn^{\frac{3}{2}}.
        \end{equation*}
        They also prove a more general version, that is, there exist positive real numbers $c=c(T,t)$ and $C=C(T,t)$ such that
        \[
        c n^{\frac{3}{2}} \leq \ex(n, P_t \mathbin{\square} T) \leq C n^{\frac{3}{2}}
        \]
        for every tree $T$ and every positive integer $t$.

        In particular, Gao, Janzer, Liu and Xu \cite{GaoExtre2023} improve the upper bound of $\ex(n, P_t \mathbin{\square} P_t)$ by proving that for any given $t$ and sufficiently large $n$
        \begin{equation*}
            \ex(n, P_t \mathbin{\square} P_t) \leq 5 t^{\frac{3}{2}} n^{\frac{3}{2}}.
        \end{equation*}
        In addition, they considered the Cartesian product of an even cycle and an edge, and show that
        \begin{equation*}
            \ex(n, K_2 \mathbin{\square} C_{2l}) = \Theta(n^{\frac{3}{2}})
        \end{equation*}
        for any integer $l \geq 4$.



  In this paper we mainly focus on the graphs $B_t \coloneq K_2 \mathbin{\square} S_t$, equivalently $t$ four-cycles sharing a common edge (see Figure~\ref{fig:Bt}). The order of magnitude of the extremal number is $\Theta(n^{\frac{3}{2}})$ by~\cite{bradavc2023turan}, so we focus on the coefficient of $n^{\frac{3}{2}}$.  
  Note that when $t=1$, we have $B_1 = C_4$.
  The upper bound for $\ex(n, C_4)$ is given by K\H{o}v\'{a}ri-S\'{o}s-Tur\'{a}n theorem \cite{kst}.
  Combining the lower bound constructions given independently by Erd\H{o}s, R'{e}nyi and S'{o}s \cite{ErdosOn1966} and Brown \cite{BrownOn1966}, we have
  \begin{equation*}
      \ex(n, C_4) = \frac{1}{2}(1+o(1))n^{\frac{3}{2}}.
  \end{equation*}
  We establish bounds for general $t$, and in the case of $B_2$ we  obtain more precise results. We also give an improvement for the bipartite Tur\'an number of $K_2\mathbin{\square} T$. Our main results are the following.



        \begin{theorem}\label{upper bound of Bt}
        For any $t\ge 1$ and sufficiently large $n$, we have
            \begin{equation*}
                \ex(n, B_t) \leq \frac{\sqrt{t}}{2} (1 + o(1)) n^{\frac{3}{2}}.
            \end{equation*}
            and
            \begin{equation*}
                \ex(n, B_t) \geq 
                \begin{cases*}
                    \frac{\sqrt{\floor{(t+1)/2}}}{2} (1 + o(1)) n^{\frac{3}{2}}, &\text{ if $t$ is odd, }\\
                     \frac{2s(s+1)(\sqrt{4s^2+5s+1}-2s-1)}{(\sqrt{4s^2+5s+1}-s-1)^{\frac{3}{2}}}(1+o(1))n^{\frac{3}{2}}, &\text{ if $t=2s$. }
                \end{cases*}
            \end{equation*}
        \end{theorem}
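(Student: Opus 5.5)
For the upper bound I would use double counting of four-cycles, in the spirit of the Kővári–Sós–Turán argument, with the $B_t$-freeness used locally at each edge. Let $G$ be an $n$-vertex $B_t$-free graph with $e$ edges. Fix an edge $xy$ and let $H_{xy}$ be the bipartite graph between $A=N(x)\setminus\{y\}$ and $B=N(y)\setminus\{x\}$ whose edges are the edges of $G$ joining $A$ to $B$. A vertex of $A\cap B$ would give a triangle rather than a $C_4$, so I would first split off or discard this overlap; this is a harmless technicality. A matching $a_1b_1,\dots,a_tb_t$ in $H_{xy}$ with all $2t$ vertices distinct is exactly a copy of $B_t$ with spine $xy$. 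So $\nu(H_{xy})\le t-1$, and by König's theorem $H_{xy}$ has a vertex cover $W_{xy}$ with $|W_{xy}|\le t-1$. Every edge of $H_{xy}$ closes a $C_4$ through $xy$. Hence $4\,\#C_4(G)\le\sum_{xy\in E}e(H_{xy})$ (up to the bounded multiplicity of ordering), and
\[
e(H_{xy})\le\sum_{w\in W_{xy}\cap A}d_{G}(w,N(y))+\sum_{w\in W_{xy}\cap B}d_G(w,N(x)),
\]
where each summand is a codegree.

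On the other side, $\#C_4(G)=\tfrac12\sum_{\{u,v\}}\binom{d(u,v)}{2}$ and $\sum_{\{u,v\}}d(u,v)=\sum_x\binom{d(x)}{2}$. Jensen's inequality then gives $\#C_4\gtrsim \frac{1}{4n^2}\bigl(\sum_x d(x)^2/2\bigr)^2$, which is of order $e^4/n^4$.

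The key step is to bound $\sum_{xy}e(H_{xy})$ sharply, and I expect this to be the main obstacle. The crude estimate $e(H_{xy})\le (t-1)(d(x)+d(y))$ gives $\sum_{xy}e(H_{xy})\le (t-1)\sum_x d(x)^2$. Comparing this with the Jensen lower bound only yields $e\le\sqrt{(t-1)/2}\,(1+o(1))n^{3/2}$, which is worse than $\sqrt t/2$. To reach $\sqrt{t}/2$ I would instead bound the cover contribution by codegrees. Each cover vertex $w\in A$ contributes $d(w,y)-1$ edges. Summing over all edges $xy$, the total becomes roughly $(t-1)$ times a weighted sum of codegrees, comparable to $\sum_{\{u,v\}}d(u,v)^2$, rather than $(t-1)\sum_x d(x)^2$. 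This gives an inequality of the form $\sum\binom{d(u,v)}2\le \frac{t-1}{?}\sum d(u,v)+$ lower order terms. With average codegree $p\approx 4e^2/n^3$, this forces $p\lesssim t$, that is $\frac{4e^2}{n^3}\cdot\frac{n^2}{2}\le t\binom n2$, i.e. $e\le \frac{\sqrt t}{2}(1+o(1))n^{3/2}$. The delicate point is handling the uneven distribution of the cover vertices over the pairs, and I would try a convexity (Cauchy–Schwarz) argument on the pairs $(w,y)$ used as covers.

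For the lower bounds I would start from the extremal structure for $C_4$, namely the Erdős–Rényi–Brown polarity graph and Füredi's $K_{2,k+1}$-free algebraic graphs with all codegrees at most $k$ and $\frac{\sqrt{k}}{2}(1+o(1))n^{3/2}$ edges. For odd $t=2k-1$, I would take Füredi's graph with codegrees at most $k$. For an edge $xy$, the $C_4$'s through $xy$ come from common neighbours of $x$ with vertices of $N(y)$, and the algebraic structure limits the matching in $H_{xy}$ to size at most $t-1=2k-2$: each side's contribution comes from at most $k-1$ extra common neighbours. Checking this claim is the combinatorial verification in this step. For even $t=2s$, I would take a random modification. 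Starting from a suitable $K_{2,s+2}$-type graph, I would keep edges with probability $p$ and delete one edge from each copy of $B_{2s}$ that survives. Optimizing the expected count $\mathbb{E}[e]-\mathbb{E}[\#B_{2s}]$ over $p$ and the underlying parameter should produce the stated expression $\frac{2s(s+1)(\sqrt{4s^2+5s+1}-2s-1)}{(\sqrt{4s^2+5s+1}-s-1)^{3/2}}$, which has the shape of a maximizer of such a function.
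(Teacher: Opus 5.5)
\textbf{Upper bound.} The gap is at the step you flag yourself, and the route you propose cannot close it.

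First, the crude comparison is misstated. Since $\bar p=\sum_x\binom{d(x)}{2}/\binom n2$ is bounded, the $-1$ in $\binom{d(u,v)}{2}$ is not lower order. Jensen together with $4\#C_4\le (t-1)\sum_x d(x)^2$ gives $\bar p\le 2t-1+o(1)$, i.e.\ $e\le \frac{\sqrt{2t-1}}{2}(1+o(1))n^{3/2}$, not $\sqrt{(t-1)/2}$.

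To reach $\frac{\sqrt t}{2}$ through Jensen, your ``?'' must be $2$. That is, you need $\sum_{\{u,v\}}\binom{d(u,v)}{2}\le \frac{t-1}{2}\sum_{\{u,v\}}d(u,v)+o(n^2)$, equivalently $4\#C_4\le \frac{t-1}{2}\sum_x d(x)^2+o(n^2)$, a factor-two improvement of the matching bound. For nearly regular graphs this does follow from $e(H_{xy})\le (t-1)\max\{d(x),d(y)\}$. It is false in general. $K_{t,m}$ is $B_t$-free, because both colour classes of the connected graph $B_t$ have $t+1$ vertices. Yet $4\#C_4=4\binom t2\binom m2\approx t(t-1)m^2$ while $\sum_x d(x)^2\approx tm^2$. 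So for $m=\Theta(n)$ the matching bound is asymptotically tight and your inequality fails by $\Theta(n^2)$.

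Your codegree sketch does not produce the inequality anyway. Charging $d(w,y)-1$ to the pairs $(w,y)$ no longer uses the cover size $t-1$, and yields only the trivial bound by $\sum d(u,v)^2$.

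The paper gains on the other side. It keeps $4N(C_4,G)\le (t-1)\sum_u d(u)^2$ and replaces Jensen by a structural lower bound. Analysing the hypergraph $\mathcal H_v$ of traces $N(v)\cap N(w)$ (spider bound on $e(G[N(v)])$, the claims on hyperedges of size $t+1$ and $\ge t+2$, Hall's theorem), it proves $\sum_{S\text{ good},|S|=t}d(S)(d(S)-1)\ge \sum_u d(u)^2-2(4t-1)e(G)-(t-1)n^2$. Each good $t$-set then gives $\binom t2\binom{d(S)}2$ four-cycles, with only $o(n^2)$ of them counted twice (special pairs). This lower bound is linear in $\sum_u d(u)^2$ with coefficient $\frac{t(t-1)}{4}$, and comparing the two bounds gives $\sum_u d(u)^2\le tn^2+O(e)$. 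On $K_{t,m}$ both of the paper's inequalities are tight, which is why the gain has to come from this side.

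\textbf{Lower bounds.} Both of your constructions fail.

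For odd $t=2k-1\ge 3$, F\"uredi's $K_{2,k+1}$-free graph contains $B_t$. All its codegrees are at most $k$, and since $\sum_{\{u,v\}}d(u,v)=\sum_w\binom{d(w)}2\approx k\binom n2$, almost all pairs have codegree exactly $k$. Hence for a typical edge $xy$, every vertex of $H_{xy}$ has degree at most $k-1$ while $e(H_{xy})\approx (k-1)d(x)$. So $H_{xy}$ has a matching of size $\Omega(\sqrt n)$. Bounded codegree controls the maximum degree of $H_{xy}$, not its vertex cover, and $B_t$-freeness needs a cover of size at most $t-1$. The paper gets this from the blow-up $G_1(s)$, $s=\lfloor (t+1)/2\rfloor$, of a $C_4$-free graph. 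There, $C_4$-freeness of $G_1$ forces every rung of a $C_4$ through $uv$ ($u\in V_x$, $v\in V_y$) to meet $(V_x\setminus\{u\})\cup(V_y\setminus\{v\})$, which has $2s-2\le t-1$ vertices.

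For $t=2s$, edge-sampling plus deletion cannot keep $\Theta(n^{3/2})$ edges. In a F\"uredi-type $K_{2,s+2}$-free host, the same matching argument puts each edge in $n^{\Theta(s)}$ copies of $B_{2s}$. The deletions then force $p=O(n^{-1/6})$, leaving $O(n^{4/3})$ edges.

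The paper instead blows up a random $pm$-set $A$ of vertices of a $C_4$-free graph into $s+1$ copies and the remaining vertices into $s$ copies. Between two classes coming from $A$ it uses $K_{s+1,s+1}$ minus a perfect matching, which keeps the cover of $H_{uv}$ below $t$. Here $p$ is a vertex density, and the stated constant is $\max_{p\in[0,1]}\frac{s^2+s(2p-p^2)}{2(s+p)^{3/2}}$.
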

        By calculation, the above result implies that
        $$\frac{1}{2\sqrt{2}}\leq \lim_{t\to \infty} \frac{\ex(n,B_t)}{\sqrt{t}}\leq \frac{1}{2}.$$
        Next, we will give a better upper bound in the case of $B_2$.
        Theorem \ref{upper bound of Bt} gives the bound 
        $$  \frac{4(\sqrt{10}-3)}{(\sqrt{10}-2)^{\frac{3}{2}}} (1+o(1))n^{\frac{3}{2}} \leq \ex(n, B_2) \leq \frac{1}{\sqrt{2}}(1+o(1))n^{\frac{3}{2}}.$$
        Where $\frac{4(\sqrt{10}-3)}{(\sqrt{10}-2)^{\frac{3}{2}}}\approx 0.518$ and $\frac{1}{\sqrt{2}}\approx 0.707$.
        In the following result, we further improve the coefficient of the upper bound of $ex(n,B_2)$ from $\frac{1}{\sqrt{2}}\approx 0.707$ to $\frac{2}{\sqrt{11}}\approx 0.603$.
        \begin{theorem}\label{thm: upper and lower bound of B2}
        For sufficiently large $n$, we have
            $$\ex(n,B_2)\leq \frac{2}{\sqrt{11}} (1+o(1)) n^{\frac{3}{2}}.$$
        \end{theorem}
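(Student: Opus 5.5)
The plan is to double count four-cycles, which is the approach behind the $\frac{\sqrt t}{2}$ bound of Theorem~\ref{upper bound of Bt}, and then refine the local estimate using the special structure available when $t=2$. Let $G$ be an $n$-vertex $B_2$-free graph with $e$ edges and degrees $d(\cdot)$. Take an edge $uv$ and look at the bipartite ``link'' graph $L_{uv}$. Its edges are the pairs $xy\in E(G)$ with $x\in N(u)\setminus\{v\}$, $y\in N(v)\setminus\{u\}$ and $x\neq y$. Two such edges on four distinct vertices would give a copy of $B_2$. Hence $L_{uv}$ has no two disjoint edges, so it is either a star or a triangle. Each four-cycle through $uv$ corresponds to an edge of $L_{uv}$, so $c(uv)$, the number of $C_4$'s through $uv$, satisfies $c(uv)\le \max(d(u),d(v))+O(1)\le d(u)+d(v)$.

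Summing over edges, the total number of four-cycles is $\#C_4=\frac14\sum_{uv}c(uv)$. On the other side, Cauchy--Schwarz on codegrees gives
\[
\#C_4\ \ge\ \frac{(1+o(1))}{8n^2}\Big(\sum_v d(v)^2\Big)^2 .
\]
Suppose we can show $\sum_{uv\in E}c(uv)\le \alpha(1+o(1))\sum_v d(v)^2$. Then $\sum_v d(v)^2\le 2\alpha n^2$, and by convexity $e\le \frac{\sqrt{2\alpha}}{2}(1+o(1))n^{3/2}$. The crude bound above only gives $\alpha=1$, hence $\frac1{\sqrt2}$. The target $\frac{2}{\sqrt{11}}$ corresponds exactly to $\alpha=\frac{8}{11}$.

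The main step, and the expected obstacle, is to gain the factor $\frac{8}{11}$ through a charging argument over the four edges of each $C_4$. Call an edge $uv$ \emph{heavy} if $L_{uv}$ is a large star with center $x\in N(u)$ and leaves $y_1,\dots,y_k\in N(v)$. In that case the vertices $x,y_1,\dots,y_k$ are forced into a rigid configuration: $x$ is adjacent to all $y_i$, and $v$ is adjacent to all $y_i$. This configuration constrains the link graphs of the other edges $ux$, $xy_i$ and $vy_i$ of the same four-cycles. For instance, in $L_{xy_1}$ all edges must go through $v$, and in $L_{ux}$ all edges must go through $v$ as well. So a four-cycle $uvy_ix$ that is ``expensive'' for $uv$ is counted in links of the other edges whose sizes are bounded by the degree of a single center vertex.

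I would first define, for each $C_4$, which of its edges is the center-edge of the star in the neighbouring links. I would then assign each $C_4$ weights to its four edges so that every edge $uv$ receives total weight at most $\frac{8}{11}(d(u)+d(v))$, up to lower order terms. This comes down to a small linear program over the possible local types: star centered on the $u$-side, star centered on the $v$-side, or triangle.

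Finally, the triangle case and the $O(1)$ corrections (vertices shared between $N(u)$ and $N(v)$, and low-degree vertices) should contribute only $o(n^{3/2})$ after deleting vertices of degree $o(\sqrt n)$ in a standard preliminary cleaning step. After that, plugging $\alpha=\frac8{11}$ into the computation above yields
\[
\ex(n,B_2)\le \frac{2}{\sqrt{11}}(1+o(1))n^{3/2}.
\]
If the weighting argument only gives a weaker constant, I would optimize the weights as a function of the degree ratio $d(x)/d(v)$ in the star case. I expect the extremal weight choice to be where the constant $11$ arises.
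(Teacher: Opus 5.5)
There is a genuine gap: the reduction your argument rests on is false, and the step meant to produce the gain is never carried out.

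\textbf{The lower bound on four-cycles.} Write $D=\sum_v d(v)^2$. Since $4\#C_4=\sum_{\{x,y\}}d(x,y)\bigl(d(x,y)-1\bigr)$ and $\sum_{\{x,y\}}d(x,y)=\sum_v\binom{d(v)}{2}$, Cauchy--Schwarz only gives $4\#C_4\ge\frac{D^2}{2n^2}-\frac{D}{2}$ up to $O(e)$. The subtracted term has the same order $n^2$ as the main term when $e=\Theta(n^{3/2})$, so no $(1+o(1))$ factor absorbs it. A $C_4$-free polarity graph shows this: it has $D\approx n^2$ and no four-cycles at all. With the correct inequality, $4\#C_4\le\alpha D$ yields only $e\le\frac{\sqrt{2\alpha+1}}{2}(1+o(1))n^{3/2}$, which for $\alpha=1$ does not even recover $\frac{1}{\sqrt2}$.

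What you are missing is the $B_2$-specific counting of Lemmas~\ref{lem: lower bound of number of C4 for Bt} and~\ref{lem: better lower bound of Bt}. Summing over pairs $S$ of codegree at least $3$, one has $\sum_S d(S)(d(S)-1)\ge D-n^2-O(e)$. Moreover, only $o(n^2)$ four-cycles have both diagonals of codegree at least $3$. Hence $N(C_4,G)\ge\frac12(D-n^2)-o(n^2)$, twice the trivial bound. With this, $4\#C_4\le\alpha D$ gives $D\le\frac{2n^2}{2-\alpha}$. The target $\frac{2}{\sqrt{11}}$ then corresponds to $\alpha=\frac58$, and $\alpha=\frac{8}{11}$ would give only $\frac12\sqrt{11/7}\approx0.627$. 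So the value $\frac{8}{11}$, and your guess about where $11$ arises, are artefacts of the wrong lower bound.

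\textbf{The charging step.} It is only announced, and in the form stated it fails for general $B_2$-free graphs. $K_{2,m}$ is $B_2$-free with $\sum_{uv}c(uv)=2m(m-1)$ and $\sum_v d(v)^2=2m^2+4m$. So no distribution of each $C_4$'s weight among its four edges achieves $\alpha<1$. The loss sits on edges joining a high-degree and a low-degree vertex, where $c(uv)\approx\max\{d(u),d(v)\}$ is close to $d(u)+d(v)$. Any valid argument must therefore use a lower bound on degrees of order $\sqrt n$ with an explicit constant, which your cleanup at degree $o(\sqrt n)$ does not supply. Separately, $c(uv)\le\max\{d(u),d(v)\}+O(1)$ is false when the centre of the star is a common neighbour of $u$ and $v$, since its leaves may then lie on both sides.

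\textbf{The paper's route.} In Lemma~\ref{lem: with large min degree}, Claims~\ref{cla: 322} and~\ref{cla: 323} show that at each $u$ at most $d(u)\log n/\sqrt n$ edges (those $uv$ with $v\in N_2(u)$) have large leaf sets on both sides. Every other edge satisfies $c(uv)\le\max\{d(u),d(v)\}+o(\sqrt n)\le d(u)+d(v)-\delta+o(\sqrt n)$, an \emph{additive} saving of $\delta$ per edge. Combined with the lower bound above, this gives $4e^2/n\le D\le 2n^2-e\delta$, i.e.\ $e\le\frac{-c+\sqrt{c^2+32}}{8}(1+o(1))n^{3/2}$ when $\delta\ge c\sqrt n$. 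One then repeatedly deletes vertices of degree below $c\sqrt j$, losing at most $\frac{2c}{3}(n^{3/2}-n'^{3/2})+O(n)$ edges when $n'$ vertices remain. Taking $c=3/\sqrt{11}$ makes $\frac{-c+\sqrt{c^2+32}}{8}=\frac{2c}{3}=\frac{2}{\sqrt{11}}$. That balance is where $11$ comes from.
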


        Let $F$ be a bipartite graph.
        The bipartite Tur\'{a}n number of $F$, denoted by $\ex_{bip} (n, F)$, is the maximum number of edges in an $n$-vertex $F$-free bipartite graph.
        The function $\ex_{bip} (n, F)$ has been extensively studied, see~\cite{AlonTuran2003, SudakovTuran2020, YuanExtre2024} for recent progress.
        For the bipartite Tur\'{a}n number of $C_4$, the upper bound is established by Reiman \cite{ReimanUber1958}, while the lower bound is given by the incidence graph of a projective plane.     
        Thus, we have
        \begin{equation*}
            \ex_{bip}(n,C_4) = \frac{1}{2\sqrt{2}}(1+o(1))n^{\frac{3}{2}}.
        \end{equation*}
        For the bipartite case, we get a more general result.

        \begin{theorem}\label{thm: upper bound of K2 cross Tree}
            Let $T$ be a tree with $t$ vertices. For sufficiently large $n$, we have
            $$\ex_{bip}(n,K_2 \mathbin{\square} T)\leq \frac{\sqrt{t-1}}{2\sqrt{2}}(1+o(1)) n^{\frac{3}{2}}.$$
        \end{theorem}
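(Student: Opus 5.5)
The plan is a Reiman-type double counting, with the forbidden $K_2\mathbin{\square}T$ playing the role that $C_4$ plays in the classical argument.

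\textbf{Setup.} Let $G$ be an $n$-vertex $K_2\mathbin{\square}T$-free bipartite graph with parts $X,Y$, $|X|=a$, $|Y|=b$, $a+b=n$, and $e$ edges. Write $d(y,y')=|N(y)\cap N(y')|$ for the codegree.

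\textbf{Reduction to a codegree bound.} Count cherries centred in $X$. By convexity,
$$\sum_{x\in X}\binom{d(x)}{2}\ \ge\ a\binom{e/a}{2}.$$
The left side also equals $\sum_{\{y,y'\}\subseteq Y} d(y,y')$. So it suffices to prove the key estimate
$$\sum_{\{y,y'\}\subseteq Y} d(y,y')\ \le\ (t-2+o(1))\binom{b}{2}+o(n^{3})\cdot\frac{e}{n^{3/2}},$$
and its analogue with the roles of $X$ and $Y$ swapped. Given this, we get $e^2/(2a)\lesssim (t-2)b^2/2+o(\cdot)$. Combining the two symmetric inequalities, exactly as Reiman does, gives $e\le \frac{\sqrt{t-1}}{2\sqrt2}(1+o(1))n^{3/2}$. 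The optimum is at $a=b=n/2$, and the extra $1$ in $t-1$ comes from the usual lower-order terms. Equivalently, one shows that apart from a negligible set of cherries the average codegree is at most $t-1$. I would first check on the projective-plane-type blow-up that $t-1$ is the right threshold: taking a $C_4$-free incidence structure and replacing each point by $t-1$ copies produces codegrees $t-1$ and no $K_2\mathbin{\square}T$.

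\textbf{Proving the codegree estimate.} I would argue by contradiction in three steps.

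\emph{Step 1: find a dense sub-structure.} Suppose the sum exceeds $(t-1+\varepsilon)\binom b2$. Build an auxiliary graph $\mathcal H$ whose vertices are the edges of $G$. Two edges $xy$ and $x'y'$ (with $x,x'\in X$ and $y,y'\in Y$) are adjacent in $\mathcal H$ when $xy'$ and $x'y$ are also edges, i.e.\ when $xy$ and $x'y'$ are opposite sides of a $C_4$.

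\emph{Step 2: embed $T$ in $\mathcal H$.} An embedding of $T$ into $\mathcal H$ whose $2t$ endpoints are all distinct is exactly a copy of $K_2\mathbin{\square}T$. The only subtlety is that the $X$-endpoint and $Y$-endpoint sides alternate along the tree, which is harmless because $T$ is bipartite.

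\emph{Step 3: clean and embed greedily.} Since $T$ has $t-1$ edges, it suffices to find a subgraph of $\mathcal H$ in which every vertex has at least $t-1$ neighbours avoiding any $O(t)$ previously used vertices of $G$. Then $T$ embeds greedily, vertex by vertex.

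To get this, I would first use a standard cleaning procedure. Repeatedly delete edges of $G$ that lie in too few $C_4$'s, and discard pairs $\{y,y'\}$ of unusually large codegree. The second type of deletion only loses $o(n^2)$ cherries, because by the Kővári–Sós–Turán bound $\ex(n,K_{2,s})=O(\sqrt s\,n^{3/2})$ the number of pairs with codegree above a large constant $s$ is small. What remains is a subgraph in which every surviving edge $xy$ has, for some $y'$, at least $t$ common neighbours shared with $y$ in an essentially ``free'' way.

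\textbf{Main obstacle.} I expect the hard part to be the distinctness in the greedy embedding: turning an excess in average codegree into an $\mathcal H$-subgraph of minimum degree at least $t-1$ that still has enough room after forbidding the $O(t)$ used vertices. I would try a weighting argument first. Assign each cherry $(y,x,y')$ to the edge $xy$, and show that if every edge of $G$ received at most $t-2+\varepsilon$ ``usable'' cherries then the codegree sum is bounded as required. Otherwise, a min-degree subgraph exists by the usual halving argument, with bounded codegrees controlling the number of collisions.
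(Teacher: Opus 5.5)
There is a genuine gap. The whole difficulty of the theorem lies in your codegree estimate, and the proposal does not prove it. Steps 1--2 only reformulate the goal. Step 3 defers exactly what you call the main obstacle to an unspecified ``weighting'' or ``halving'' argument.

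A static dichotomy of the form ``either every edge has few usable cherries, or some subgraph has large minimum degree'' does not work here. Deleting an edge $xy$ lowers $d(y,y')$ for every $y'\in N(x)$, and this can make cherries at other centres unusable. So what must be controlled is the \emph{total} number of cherries destroyed over the whole cleaning. A usual halving/minimum-degree argument loses a constant factor, which is fatal: $\sqrt{t-1}/(2\sqrt2)$ is exactly the coefficient coming from average codegree $t-1$.

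The missing idea is an amortized deletion process. In your orientation, it goes as follows.
\begin{itemize}
\item Repeatedly delete a vertex $x\in X$ with $d(x)\le t-1$. If none exists, delete an edge $xy$ such that $x$ has at most $t-1$ neighbours $y'\ne y$ with $d(y,y')\ge t$.
\item If this stops at a nonempty graph, then for every edge $xy$, the vertex $x$ has at least $t$ neighbours $y'$ with $d(y,y')\ge t$. Since fewer than $t$ vertices of each side are used at any moment, $T$ embeds greedily. This, rather than an $\mathcal H$-degree condition (those degrees are typically of order $\sqrt n$ anyway), is the right robustness requirement.
\item Otherwise the graph is emptied. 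Each edge deletion made when $d(x)=d$ destroys at least $d-t$ cherries $(y,x,y')$ with $d(y,y')\le t-1$. Codegrees never increase, and each such deletion lowers $d(y,y')$ by one, so every pair $\{y,y'\}$ is charged at most $t-1$ times.
\end{itemize}
Since each $x$ loses its edges one at a time down to degree $t-1$, this gives $\sum_x\binom{\max\{d(x)-t+1,0\}}{2}\le (t-1)\binom{b}{2}$, which is the sharp estimate. This is the paper's argument with the two sides interchanged. Cauchy--Schwarz, with the pairs taken in the smaller part (equivalently, your symmetric combination), then finishes.

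Some auxiliary claims also need fixing.
\begin{itemize}
\item The key estimate with $t-2$ is false once the error term is genuinely negligible. For $t=2$ we have $K_2\mathbin{\square}T=C_4$, and in a projective-plane incidence graph every pair of lines has codegree $1$. Likewise, your own $(t-1)$-fold blow-up has all line-pair codegrees equal to $t-1$. The constant $t-1$ is not produced by lower-order terms, so the target must be $t-1$ from the start, as your ``equivalently'' sentence says.
\item The error term $o(n^3)\cdot e/n^{3/2}$ is $o(n^3)$ when $e=\Theta(n^{3/2})$, so it swamps the main term of order $n^2$. You need $O(n^{3/2})$ or $o(n^2)$.
\item The K\H{o}v\'ari--S\'os--Tur\'an bound says nothing about pairs of large codegree in $G$. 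Your $G$ need not be $K_{2,s}$-free: for $t\ge 3$, $K_{2,s}$ is itself $K_2\mathbin{\square}T$-free, since one side has only two vertices. The conclusion you want is true, but it again comes from the deletion process. The potential $\sum_{\{y,y'\}}\max\{d(y,y')-t+1,0\}$ drops by at most $t-1$ per edge deletion and by at most $\binom{t-1}{2}$ per vertex deletion, and it ends at $0$. Hence it is initially at most $(t-1)e+\binom{t-1}{2}n=O(n^{3/2})$.
\end{itemize}
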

        \noindent
        Specializing to the tree $T=K_{1,t}$ we obtain an upper bound 
        $$\ex_{bip}(n,B_t) \le\frac{\sqrt{t}}{2\sqrt{2}}(1+o(1))n^{\frac{3}{2}}.$$
        The following result provides a lower bound for $\ex_{bip}(n, B_t)$.

        \begin{theorem}\label{bipartite upper bound of Bt}
        For any $t\ge 1$ and sufficiently large $n$, we have
            \begin{equation*}
                \ex_{bip}(n, B_t) \geq
                \begin{cases*}
                    \frac{\sqrt{\floor{(t+1)/2}}}{2\sqrt{2}}(1 + o(1)) n^{\frac{3}{2}}, \text{ if $t$ is odd,}\\
                    \frac{1}{4} \frac{t(t+2)}{(t+1)^{\frac{3}{2}}}(1+o(1)) n^{\frac{3}{2}}, \text{ if $t$ is even}.
                \end{cases*}
            \end{equation*}
        \end{theorem}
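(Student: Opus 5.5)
We prove the bound with an explicit blow-up of the point–line incidence graph of a projective plane. Blowing up points and lines by different factors handles both parities of $t$ at once.

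\textbf{Construction.} Let $q$ be a prime power, and let $P$ and $\mathcal L$ be the points and lines of $PG(2,q)$. Set $N=q^2+q+1$ and let $I$ be the bipartite incidence graph; it has $N(q+1)=(1+o(1))N^{3/2}$ edges and contains no $C_4$. Fix positive integers $a,b$ with $a+b=t+1$.

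Let $G$ be the graph obtained from $I$ by replacing each point $p$ by $a$ independent copies $p^{(1)},\dots,p^{(a)}$ and each line $L$ by $b$ independent copies $L^{(1)},\dots,L^{(b)}$. A point copy is joined to a line copy exactly when the point lies on the line. Then $G$ is bipartite with $n=(a+b)N$ vertices and
\[
ab\,N(q+1)=(1+o(1))\frac{ab}{(a+b)^{3/2}}\,n^{3/2}
\]
edges.

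\textbf{Key step: $G$ is $B_t$-free.} Write $B_t$ as an edge $uv$ together with $t$ vertex-disjoint edges $x_iy_i$, where $x_i\in N(u)\setminus\{v\}$ and $y_i\in N(v)\setminus\{u\}$, all $2t+2$ vertices distinct. So it suffices to show that for every edge $uv$, the bipartite graph between $N(u)\setminus\{v\}$ and $N(v)\setminus\{u\}$ has no matching of size $t$.

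Take $u=p^{(i)}$ and $v=L^{(j)}$ with $p\in L$. Then:
\begin{itemize}
\item $N(u)\setminus\{v\}$ consists of copies of lines through $p$;
\item $N(v)\setminus\{u\}$ consists of copies of points on $L$.
\end{itemize}
Suppose a copy of a line $M\ni p$ is adjacent to a copy of a point $x\in L$. Then $x\in M\cap L$.
\begin{itemize}
\item If $M\neq L$, then $M\cap L=\{p\}$, so $x=p$, and the point copy is one of the $a-1$ copies of $p$ other than $u$.
\item If $M=L$, the line copy is one of the $b-1$ copies of $L$ other than $v$.
\end{itemize}
Hence every edge of this bipartite graph meets the set consisting of the copies of $p$ other than $u$ and the copies of $L$ other than $v$. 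This set is a vertex cover of size $(a-1)+(b-1)=t-1$. By K\H{o}nig's theorem, or simply because each matching edge uses a distinct cover vertex, every matching has size at most $t-1$. Therefore $G$ is $B_t$-free.

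\textbf{Optimizing $a,b$.} Subject to $a+b=t+1$, the product $ab$ is maximized by splitting as evenly as possible.
\begin{itemize}
\item If $t=2k-1$, take $a=b=k$. The edge count becomes
\[
\frac{k^2}{(2k)^{3/2}}\,n^{3/2}=\frac{\sqrt{k}}{2\sqrt2}\,n^{3/2},
\]
and $k=\lfloor (t+1)/2\rfloor$.
\item If $t=2s$, take $a=s$ and $b=s+1$. This gives
\[
\frac{s(s+1)}{(2s+1)^{3/2}}=\frac14\cdot\frac{t(t+2)}{(t+1)^{3/2}}.
\]
\end{itemize}
For $t=1$ this recovers the projective-plane bound for $C_4$.

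\textbf{General $n$.} Choose the largest prime $q$ with $(t+1)(q^2+q+1)\le n$. By the prime number theorem (prime gaps are $o(q)$), $q=(1+o(1))\sqrt{n/(t+1)}$. Then add isolated vertices to reach exactly $n$ vertices; this changes only the $1+o(1)$ factor.

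The only nontrivial step is the vertex-cover argument above. It relies on two lines meeting in exactly one point, and it is what forces the choice $a+b=t+1$.
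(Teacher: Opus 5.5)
Your proof is correct and is essentially the paper's construction: you take an $(a,b)$-blow-up with $a+b=t+1$, $a=\lfloor (t+1)/2\rfloor$, $b=\lceil (t+1)/2\rceil$, of an extremal $C_4$-free bipartite graph, which you instantiate concretely as the projective-plane incidence graph. Your vertex-cover argument (every edge between $N(u)\setminus\{v\}$ and $N(v)\setminus\{u\}$ meets the other copies of $p$ or of $L$, so there is no $t$-matching) is the bipartite version of Claim~\ref{cla: 201}, which the paper invokes without writing out.
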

    The above results implies that
    $$\frac{1}{4}\leq \lim_{t\to\infty}\frac{ex_{bip}(n,B_t)}{\sqrt{t}}\leq \frac{1}{2\sqrt{2}}.$$

        When $t = 2$, by Theorem \ref{thm: upper bound of K2 cross Tree} and \ref{bipartite upper bound of Bt}, we have $$ \frac{2}{3\sqrt{3}} (1+o(1)) n^{\frac{3}{2}} \leq \ex_{bip}(n ,B_2) \leq \frac{1}{2}(1+o(1))n^{\frac{3}{2}}.$$
        Where $\frac{2}{3\sqrt{3}}\approx 0.385$.
        In the following result, we further improve the upper bound of $\ex_{bip}(n, B_2)$ from $\frac{1}{2}$ to $0.468$.
        
        \begin{theorem}\label{thm: bipartite upper and lower bound of B2}
        For sufficiently large $n$, we have
            $$\frac{2}{3\sqrt{3}} (1+o(1))n^{\frac{3}{2}}\leq \ex_{bip}(n,B_2)\leq 0.468(1+o(1))n^{\frac{3}{2}}.$$
        \end{theorem}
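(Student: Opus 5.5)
The lower bound needs no new work. Setting $t=2$ in the even case of Theorem~\ref{bipartite upper bound of Bt} gives $\frac14\cdot\frac{2\cdot 4}{3^{3/2}}=\frac{2}{3\sqrt3}$. So the whole task is the upper bound $0.468$, which should improve the bound $\frac12$ coming from Theorem~\ref{thm: upper bound of K2 cross Tree}.

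\emph{Structural step.} Let $G$ be a $B_2$-free bipartite graph with parts $X,Y$, where $|X|+|Y|=n$ and $e(G)=e$. A copy of $B_2$ is an edge $uv$ together with two vertex-disjoint edges $ab,cd$ such that $a,c\in N(v)\setminus\{u\}$ and $b,d\in N(u)\setminus\{v\}$. Hence $B_2$-freeness says exactly this: for every edge $uv$, the bipartite graph $H_{uv}$ between $N(u)\setminus\{v\}$ and $N(v)\setminus\{u\}$ has matching number at most $1$. Since $H_{uv}$ is bipartite, it is therefore a star. Consequently every $C_4$ through $uv$ passes through a common fourth vertex $w$ (the star centre), and the number $c(uv)$ of $C_4$'s through $uv$ satisfies $c(uv)\le \max(d(u),d(v))-1$. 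More importantly, $c(uv)\le d(u,w)-1$, where $w$ is the star centre and $d(u,w)$ is the codegree of $u$ and $w$ (taking $w\in N(v)$, so that $u$ and $w$ lie on the same side).

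\emph{Counting.} Count $C_4$'s in two ways. On one side,
\[
4\,\#C_4=\sum_{uv\in E}c(uv).
\]
On the other,
\[
\#C_4=\sum_{\{x,x'\}\subseteq X}\binom{d(x,x')}{2},
\]
and the same identity holds with $Y$ in place of $X$. The point of the star structure is that for each edge $uv$ all of its $C_4$'s are concentrated on a single codegree pair. This lets me bound $\sum_{uv} c(uv)$ in terms of the codegree sums: each pair $\{u,w\}$ with codegree $k$ can serve as a star centre for at most $k$ edges $uv$, each contributing at most $k-1$. Thus the total number of $C_4$'s is at most roughly $\sum k^2$ over the "centre pairs". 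Meanwhile Jensen's inequality lower-bounds $\sum_x\binom{d(x)}{2}=\sum_{\{y,y'\}}d(y,y')$, which is about $e^2/(2|X|)$.

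\emph{Optimisation.} Combining these with convexity on the codegree distribution yields an inequality of the form $F(e/n^{3/2},\,|X|/n)\le 0$. I will then optimise over the part ratio $\alpha=|X|/n$ and over how the codegree mass splits between pairs of codegree $1$, $2$, and larger, each configuration giving a different contribution. Numerically maximising $e/n^{3/2}$ subject to these constraints should produce the constant $0.468$.

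\emph{Main obstacle.} The hardest step is getting a sharp enough interplay between the two $C_4$ counts. The naive bound $c(uv)\le d-1$ only recovers $\frac12$. The gain must come from exploiting that $C_4$'s through an edge share a vertex, so high codegree pairs are "expensive". If the direct approach falls short, I would first try a weighted double count over pairs $(uv,w)$, applied symmetrically from both sides $X$ and $Y$, and then solve the resulting small nonlinear program.
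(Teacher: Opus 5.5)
\textbf{Verdict: there is a genuine gap in the upper bound.} Your lower bound is correct and is exactly how the paper gets it (Theorem~\ref{bipartite upper bound of Bt} with $t=2$). Your observation that every link graph $H_{uv}$ is a star is also correct. However, you never write down an inequality that goes below $\frac12$, and the counting step has no content. The bound $\sum_{uv}c(uv)\lesssim\sum_{\text{centre pairs}}k^2$ is no stronger than the identities $\#C_4=\sum_{X\text{-pairs}}\binom{k}{2}=\sum_{Y\text{-pairs}}\binom{k}{2}$ you already have.

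Your claim that the naive bound recovers $\frac12$ also does not follow from the quantities you list. Take a biregular graph with $|X|=n/3$ and $|Y|=2n/3$. Then $4\#C_4\le\sum_{uv}\max(d(u),d(v))=\sum_{x\in X}d(x)^2$ and $\#C_4\ge\sum_{x\in X}\binom{d(x)}{2}-\binom{|Y|}{2}$ give only $e\lesssim\sqrt{2|X||Y|^2}=\sqrt{8/27}\,n^{3/2}\approx0.544\,n^{3/2}$, and the $Y$-side version is vacuous. The paper gets $\frac12$ from a \emph{one-sided} count. $N_2$ counts copies whose pair in the larger part $V_2$ has codegree at least $3$. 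By $B_2$-freeness, an edge $v_1v_2$ lies in the common neighbourhood of at most one such pair containing $v_2$, so $4N_2\le\sum_{v\in V_2}d(v)^2$. Lemma~\ref{lem: lower bound of number of C4 for Bt on bip} gives $2N_2\ge\sum_{v\in V_2}d(v)^2-3e-n_1^2$. Together these give $e\le\sqrt{2n_1^2n_2}\,(1+o(1))\le\frac12(1+o(1))n^{3/2}$.

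\textbf{The missing idea for $0.468$.} The improvement comes from a cross-side interaction that your plan does not contain. First, delete vertices of degree below $cn^{1/2}$; this minimum-degree input is essential. The paper then uses two consequences of $B_2$-freeness for a pair $\{u_1,u_2\}\subseteq V_1$ of codegree at least $3$.
\begin{enumerate}
\item For each $v\in N(u_1,u_2)$, the edges $vu_1,vu_2$ lie in no copy counted by $N_2$. So the bound $4N_2\le\sum_{V_2}d^2$ improves by $2\sum_{v\in N(u_1,u_2)}d(v)\ge 2cn^{1/2}d(u_1,u_2)$.
\item Two common neighbours of $\{u_1,u_2\}$ have no further common neighbour. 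So the minimum degree caps $d(u_1,u_2)$ at about $n_2/(cn^{1/2})$.
\end{enumerate}
This yields a dichotomy.
\begin{itemize}
\item If $N_1\le\epsilon n^2$, Lemma~\ref{lem: lower bound of number of C4 for Bt on bip} on the $V_1$ side gives $e\le\sqrt{4/27+2\epsilon}\,(1+o(1))n^{3/2}$.
\item If $N_1>\epsilon n^2$, the good pairs of $V_1$ have total codegree at least $2c\epsilon n^{5/2}/n_2$. The saving is then at least $4c^2\epsilon n^3/n_2$, and $e\le\sqrt{1/4-4c^2\epsilon}\,(1+o(1))n^{3/2}$.
\end{itemize}
The constant $0.468$ is the balance point of these two bounds, at $\epsilon=0.0354$ and $c=0.468$. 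It is not the output of an optimisation over codegree profiles, and your proposal gives no mechanism that would produce it.
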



        The paper is organized as follows.
        In Section \ref{sec:intro}, we introduce the background of the problem.
        In Section \ref{sec:lem and notation}, we give some notation and introduce several useful lemmas, and give the lower bounds for $\ex(n, B_t)$ and $\ex_{bip} (n, B_t)$.
        In Section \ref{sec: Bt}, we give the proofs of Theorem \ref{upper bound of Bt} and Theorem \ref{thm: upper and lower bound of B2}.
        In Section~\ref{sec: tree}, we give the proof of Theorem \ref{thm: upper bound of K2 cross Tree}.
        In Section \ref{sec: Bt bipartite}, we give the proofs of Theorem \ref{thm: bipartite upper and lower bound of B2}.

    \section{Preliminaries}\label{sec:lem and notation}

        In this section, we will give some notation and introduce several useful lemmas, and give the lower bounds for $\ex (n, B_t)$ and $\ex_{bip}(n, B_t)$.

        \subsection{Notation}

        For a graph $G$, we use $V(G)$ and $E(G)$ to denote the vertex set and edge set of $G$, respectively.
        We use $v(G) = |V(G)|$ and $e(G) = |E(G)|$ to denote the number of vertices and edges of $G$, respectively. 
        For a subset $S \subseteq V(G)$, we use $N_G(S)$ (or $N(S)$ if the subscript is clear) to denote the common neighbors of $S$, i.e., $N_G(S) = \{v: uv \in E(G) \text{ for every vertex $u$ in $S$}\}$.
        If $S = \{v\}$, we write $N(v)$ instead of $N(S)$ and denote $N[v] = N(v) \cup \{v\}$.
        We use $d(S)$ to denote the codegree of $S$, i.e., $d(S) = |N(S)|$.
        When $S = \{v\}$, $d(S)$ is the degree of $v$.
        For two sets of vertices $A$ and $B$, let $e(A,B)$ be the number of edges with one endpoint in $A$ and the other endpoint in $B$.
        We say a vertex set $S$ is \emph{good} if $d(S) \geq |S| + 1$.
        For a graph $G$, denote by $N(C_4, G)$ the number of copies of $C_4$ (denoted by $u_1 v_1 u_2 v_2$) for which there exists a good set $S$ of size $t$ such that $\{u_1, u_2\} \subseteq S$ or $\{v_1, v_2\} \subseteq S$.
        For bipartite graph $G$ with parts $V_1$ and $V_2$, denote by $N_i(C_4, G)$ the number of copies of $C_4$ (denoted by $C$) for which there exists a good set $S$ of size $t$ such that $C \cap V_i \subseteq S$.
        Let $G[S]$ be the subgraph of $G$ induced by the vertex set $S$.




        \subsection{Lower bounds of $\ex(n, B_t)$ and $\ex_{bip}(n, B_t)$}\label{subsec: lower bound}

        In this subsection, we give the lower bounds of $\ex(n, B_t)$ and $\ex_{bip}(n, B_t)$.
        The following definition is crucial for constructing $B_t$-free graphs.

        \begin{definition}\label{def:blow up}
            Let $G$ be a graph and $t$ be a positive integer.
            The {\it $t$-blow-up of $G$}, denoted by $G(t)$, is the graph obtained by replacing each vertex $u$ of $G$ by an independent set $V_u$ with $|V_u|=t$ and replacing each edge $uv$ by all the possible edges between $V_u$ and $V_v$.

            Let $G$ be a bipartite graph with parts $X$ and $Y$, and let $s, t$ be positive integers.
            The {\it $(s, t)$-blow-up of $G$}, denoted by $G(s, t)$, is the bipartite graph obtained by replacing each vertex $u$ of $X$ by an independent set $V_u$ with $|V_u|=s$, and each vertex $v$ of $Y$ by an independent set $V_v$ with $|V_v|=t$, and replacing each edge $uv$ by all the possible edges between $V_u$ and $V_v$.
        \end{definition}

        Now we can give the construction of a $B_t$-free graph.
        Let $n$ be a sufficiently large integer and $t$ be a positive integer.
        Denote $\floor{(t+1)/2}$ by $s$ for convenience.

        By the Tur\'{a}n number of $C_4$, there is a $C_4$-free graph, say $G_1$, of order $n/s$ with
        \begin{equation*}
            e(G_1) = (1 + o(1)) \frac{1}{2} \left(\frac{n}{s}\right)^{\frac{3}{2}}
        \end{equation*}
        edges.
        Let $G = G_1(s)$ be the $s$-blow-up of $G_1$.
        Note that $G$ is of order $n$ and every edge in $G_1$ corresponds to a complete bipartite graph $K_{s, s}$ in $G$ and hence
        \begin{equation*}
            e(G) = s^2 \cdot e(G_1) = (1 + o(1))\frac{\sqrt{\floor{(t+1)/2}}}{2} n^{\frac{3}{2}}.
        \end{equation*}

        \begin{claimthm}\label{cla: 201}
            The graph $G$ is $B_t$-free.
        \end{claimthm}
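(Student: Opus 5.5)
We argue by contradiction: suppose $G=G_1(s)$ contains a copy of $B_t$ with central edge $uu'$ and rungs $v_iv_i'$ for $i=1,\dots,t$, so that $uv_i$, $u'v_i'$ and $v_iv_i'$ are all edges of $G$. Let $\pi:V(G)\to V(G_1)$ send each vertex of $V_x$ to $x$. Since each $V_x$ is independent, adjacent vertices of $G$ map to adjacent, hence distinct, vertices of $G_1$. Write $a=\pi(u)$ and $a'=\pi(u')$; these are adjacent in $G_1$.

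\textbf{Step 1: each rung maps onto the central edge.} For each $i$, the closed walk $u\,v_i\,v_i'\,u'\,u$ maps to a closed walk $a\,\pi(v_i)\,\pi(v_i')\,a'\,a$ of length $4$ in $G_1$. Consecutive images are distinct. The case $\pi(v_i)=a'$ and $\pi(v_i')=a$ is allowed.

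Suppose $\pi(v_i)\neq a'$. The four images cannot all be distinct, since $G_1$ is $C_4$-free. We also have $a\ne\pi(v_i')$: otherwise $\pi(v_i)$ and $a'$ would both be common neighbours of $a$ and $\pi(v_i)\ne a'$. The remaining coincidence is $\pi(v_i)=\pi(v_i')$, which is impossible because $v_iv_i'$ is an edge. The other possible coincidences either contradict adjacency or reduce to the same cases.

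This is the main point to handle carefully. A closed walk $a\,b\,c\,a'$ in a $C_4$-free graph with consecutive vertices distinct is degenerate, so $a=c$ or $b=a'$. If $b=a'$, then $c$ is adjacent to $a'=b$, consistent with the rung. If $a=c$, then $\pi(v_i')=a$, which forces $\pi(v_i)=b$ with $b\sim a$, and we also need $a'\sim c=a$.

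\textbf{Step 2: counting.} I need to recheck the case $\pi(v_i')=a$ with $b$ arbitrary. In that case $v_i'\in V_a$ and $v_i\in N(u)$. Similarly, when $b=a'$ we get $v_i\in V_{a'}$ and $v_i'\in N(u')$.

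So every rung has either $v_i\in V_{a'}$ or $v_i'\in V_a$. The vertices $v_i$ are distinct, and $u'\in V_{a'}$ is not among them, so at most $s-1$ indices satisfy $v_i\in V_{a'}$. Likewise, at most $s-1$ indices satisfy $v_i'\in V_a$, since $u\in V_a$ is excluded. Hence
\[
t\le 2(s-1)=2\floor{(t+1)/2}-2\le t-1,
\]
a contradiction. Therefore $G$ is $B_t$-free.
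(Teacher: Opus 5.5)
Your argument is correct and is essentially the paper's: project to $G_1$, use $C_4$-freeness to force $\pi(v_i)=a'$ or $\pi(v_i')=a$ for every rung, then count inside $V_a\cup V_{a'}$. Your bound of $s-1$ per part is the same inequality $t+2\le 2s$ that the paper gets for the union after including $u,u'$. Do delete the sentence in Step 1 claiming $a\ne\pi(v_i')$: two neighbours of $a$ give no contradiction, and that case genuinely occurs. It is exactly one of the two branches you correctly keep in the next paragraph and in Step 2.
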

        \begin{claimproof}
            Suppose to the contrary that $G$ contains a copy of $B_t$.
        Let the $t$ copies of $C_4$ in this $B_t$ be $u v x_i y_i$, where $i = 1, \dots, t$.
        Recall that every vertex $x$ in $G_1$ corresponds to an independent set $V_x$ with size $s$ in $G$.
        Assume that $u \in V_x$ for some $x$ and $v \in V_y$ for some $y$.
        For any $j \in [t]$, we claim that $x_j \in V_x$ or $y_j \in V_y$.
        Indeed, if not, then the $x_j \in V_z$ for some $z$ and $y_j \in V_w$ for some $w$.
        Notice that $w \neq z$, since each vertex in $G_1$ corresponds to an independent set in $G$.
        It follows from the definition of blow-up immediately that $x y z w$ is a copy of $C_4$ in graph $G_1$, contradicting the choice of $G_1$. This shows that $x_j \in V_x$ or $y_j \in V_y$. Observe that $u, x_1, \dots, x_t$ are pairwise distinct and $v, y_1, \dots, y_t$ are pairwise distinct. Hence, we have
        \begin{equation*}
            t+2\le\left|\left(\bigcup_{j=1}^t \{u, v, x_j, y_j\}\right)\cap(V_x\cup V_y)\right|\le |V_x\cup V_y|=2s=2\floor{\frac{t+1}{2}}\le t+1,
        \end{equation*}
        implying a contradiction.
        \end{claimproof}
        
        For $B_{2s}$, we can obtain a better lower bound by using random construction.
        Fix $p \in [0, 1]$.
        Let $m$ be an integer.
        Let $G_0$ be a $C_4$-free graph with $\frac{1}{2} (1+o(1))m^{\frac{3}{2}}$ edges.
        Choose a $pm$-set $A$ uniformly among all $pm$-subsets.
        If $x \notin A$, replace $x$ by $s$ copies.
        If $x \in A$, replace $x$ by $s+1$ copies.
        For each edge $xy$ in $G_0$,
        if $x, y \notin A$, replace it by $K_{s,s}$.
        If exactly one of $x, y$ in $A$, replace it by $K_{s,s+1}$.
        If $x, y \in A$, replace it by $K_{s+1,s+1}$ minus a perfect matching.
        Denote the final graph by $G_1$.
        As in Claim~\ref{cla: 201}, it can be shown that $G_1$ is $B_{2s}$-free. 
        The number of vertices in $G_1$ is
        \begin{equation*}
            v(G_1) = (m-pm)s + pm \cdot (s+1) = sm + pm.
        \end{equation*}
        The number of edges in $G_1$ is 
        \begin{equation*}
            e(G_1) = s^2 \cdot (e(G_0) - \sum_{e \in E(G_0)} 1_{e \cap A \neq \emptyset}) + s(s+1) \cdot \sum_{e \in E(G_0)} 1_{e \cap A \neq \emptyset} = s^2 \cdot e(G_0) + s \sum_{e \in E(G_0)} 1_{e \cap A \neq \emptyset}.
        \end{equation*}
        The probability that an edge in $G_0$ intersects $A$ is $1 - \frac{\binom{m-2}{pm}}{\binom{m}{pm}} = 2p - p^2 +o(1)$.
        Thus
        \begin{equation*}
            E(e(G_1)) = s^2 \cdot e(G_0) + s \cdot (2p - p^2 +o(1)) e(G_0).
        \end{equation*}
        Using $e(G_0) = \frac{1}{2}(1+o(1))m^{\frac{3}{2}}$, we have that there exists a $G_1$ on $(s+p)m$ vertices and at least $(\frac{1}{2} \frac{s^2+s(2p-p^2)}{(s+p)^{\frac{3}{2}}})(1+o(1))v(G_1)^{\frac{3}{2}}$ edges, which gives 
        \begin{equation*}
            \ex(n,B_{2s}) \geq \max_{p \in [0,1]} \left(\frac{1}{2} \frac{s^2+s(2p-p^2)}{(s+p)^{\frac{3}{2}}} \right)(1+o(1))n^{\frac{3}{2}}.
        \end{equation*}
        Let $f(p) = \frac{1}{2} \frac{s^2+s(2p-p^2)}{(s+p)^{\frac{3}{2}}}$.
        The $f(p)$ achieves maximum when $p = \sqrt{4s^2+5s+1} - 2s - 1$.
        Thus
        \begin{equation}
            \ex(n, B_{2s}) \geq \frac{2s(s+1)(\sqrt{4s^2+5s+1}-2s-1)}{(\sqrt{4s^2+5s+1}-s-1)^{\frac{3}{2}}}(1+o(1))n^{\frac{3}{2}}.
        \end{equation}

        For the bipartite $B_t$-free graph, the construction is similar to the construction of a $B_t$-free graph.
        Let $a = \floor{\frac{t+1}{2}}$ and $b = \ceil{\frac{t+1}{2}}$.
        Let $G_1$ be the bipartite $C_4$-free graph with two parts each of size $\floor{n/(t+1)}$ and the maximum number of edges.
        Let $G$ be the $(a, b)$-blow-up of $G_1$.
        If needed, add isolated vertices at the end to reach exactly $n$ vertices.
        By the bipartite Tur\'{a}n number of $C_4$, if $t$ is odd, the graph $G$ has $e(G) = \frac{\sqrt{\floor{(t+1)/2}}}{2\sqrt{2}}(1 + o(1)) n^{\frac{3}{2}}$.
        If $t$ is even, the graph $G$ has $e(G) = \frac{1}{4} \frac{t(t+2)}{(t+1)^{\frac{3}{2}}}(1+o(1)) n^{\frac{3}{2}}$.
        Similar to Claim~\ref{cla: 201}, it can be shown that these bipartite graphs are $B_t$-free.

        \subsection{Useful lemmas}\label{subsec: useful lemmas}

        In the rest of the paper, we focus on the upper bound. 
        Our strategy is to double-count the number of copies of $C_4$ that have two opposite vertices contained in a good set of size~$t$.
        The following lemmas give the lower bound of the number of copies of $C_4$ that have two opposite vertices contained in a good set of size $t$.


        \begin{lemma}\label{lem: lower bound of number of C4 for Bt}
            Let $t$ be a positive integer and $G$ be a $B_t$-free graph on $n$ vertices. When $n$ is sufficiently large, we have
            \begin{equation*}
                \sum_{\substack{S \subseteq V(G), |S| = t \\ S \text{ is good}}} d(S)(d(S) - 1) \geq \sum_{u \in V(G)} d(u)^2 - 2(4t-1) e(G) - (t-1) n^2.
            \end{equation*}
        \end{lemma}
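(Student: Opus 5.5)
Read the left-hand side as a count of ordered pairs. It is
\[
\sum_{\substack{|S|=t\\ S\text{ good}}} d(S)(d(S)-1) \;=\; \#\{(S,x,y): |S|=t,\ S \text{ good},\ x\neq y,\ x,y\in N(S)\}.
\]
Each such triple $(S,x,y)$ is a family of $C_4$'s with opposite pair $\{x,y\}$ and the other opposite vertices in $S$. Grouping by the ordered pair $(x,y)$ gives
\[
\text{LHS}=\sum_{x\neq y} g(x,y),
\]
where $D=D(x,y)=N(x)\cap N(y)$ and $g(x,y)$ is the number of good $t$-subsets $S\subseteq D$.

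For the right-hand side,
\[
\sum_u d(u)^2=\sum_{x,y} |D(x,y)| = 2e(G)+\sum_{x\ne y}|D(x,y)|,
\]
since the diagonal terms $x=y$ contribute $2e(G)$. So it suffices to prove a pointwise bound for every ordered pair $x\neq y$ with $c=|D(x,y)|$:
\[
g(x,y)\;\ge\; c-(t-1)-\epsilon(x,y),\qquad\text{with}\qquad \sum_{x\neq y}\epsilon(x,y)\le 2(4t-1)e(G)-2e(G).
\]
The $-(t-1)$ per ordered pair sums to at most $(t-1)n^2$.

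\textbf{Producing good $t$-subsets.} Fix $x\ne y$ and order $D$ as $z_1,\dots,z_c$. Consider the $c-t+1$ distinct windows $S_i=\{z_i,\dots,z_{i+t-1}\}$, or better, $c$ distinct $t$-subsets obtained cyclically. Each window already has $x,y\in N(S_i)$, so $S_i$ fails to be good only if $N(S_i)$ has at most $t$ vertices. The task is to show that few windows fail, and to charge each failure to an edge.

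\textbf{Using $B_t$-freeness.} This is where the hypothesis enters, and I expect it to be the main obstacle. The natural tool is the structure of $B_t$: an edge $uv$ together with $t$ vertex-disjoint edges $x_iy_i$ satisfying $x_i\in N(u)$ and $y_i\in N(v)$. I would try the following.
\begin{enumerate}
\item Suppose the vertices of $D$ span edges (or $x,y$ are adjacent). An edge $z_iz_j$ inside $D$, together with $x$ and $y$ and further common neighbours, produces $C_4$'s sharing the edge $z_iz_j$.
\item A family of $t$ disjoint such configurations would give a $B_t$. By the K\H{o}nig-type max-matching/min-cover dichotomy, $B_t$-freeness forces the "bad" configurations to be covered by fewer than $t$ vertices. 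I expect this to bound the number of bad windows by $O(t)$ times the number of edges incident to the relevant vertices.
\item Choose the ordering of $D$ (for example greedily) so that bad windows are concentrated, and then charge each bad window to an edge.
\item Sum over pairs. Each edge should be charged at most $2(4t-1)-2$ times, which gives the $2(4t-1)e(G)$ error term after including the diagonal contribution.
\end{enumerate}

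\textbf{Fallback if the window argument does not give a clean constant.} I would double count triples $(u,S,\cdot)$ around each vertex $u$ directly. Write $d(u)^2$ as $\sum_{x,y\in N(u)}1$ and assign pairs $(x,y)$ to good $t$-sets containing $u$, built from $u$ together with $t-1$ further common neighbours of $x$ and $y$. The loss would be at most $t-1$ per pair plus $O(t)$ per edge. The constants would then be tuned to match $2(4t-1)$ and $t-1$.
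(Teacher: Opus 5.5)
\textbf{Verdict: there is a genuine gap.} Your reduction is correct. With the left side written as $\sum_{x\neq y}g(x,y)$ and $\sum_u d(u)^2=2e(G)+\sum_{x\neq y}|D(x,y)|$, it does suffice to find charges $\epsilon(x,y)$ with $g(x,y)\ge |D(x,y)|-(t-1)-\epsilon(x,y)$ and $\sum_{x\neq y}\epsilon(x,y)\le (8t-4)e(G)$. The paper's proof can be read as exactly such a charging, grouped by the first vertex $x=v$.

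Everything after the reduction, however, is a plan rather than an argument. Its central goal, showing that few windows $S_i\subseteq D(x,y)$ fail to be good, is false in the graphs that matter.
\begin{itemize}
\item Already in $K_{2,c}$ with $t\ge 2$, no $t$-subset of $D(x,y)$ is good.
\item More tellingly, take the $s$-blow-up of a $C_4$-free graph from Section~\ref{subsec: lower bound}, with $t\ge 3$ so that $2\le s<t$. Two vertices $x,y$ of one blob $V_a$ have $D(x,y)=N(x)$ of order $\sqrt n$. Yet every $t$-subset of $N(x)$ meets two distinct blobs $V_b,V_{b'}$, so its common neighbourhood is exactly $V_a$, of size $s<t+1$.
\end{itemize}
For such pairs $g(x,y)=0$, and the whole codegree $|D(x,y)|$ must be paid for by edges. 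Your items 1--4 give no mechanism for this. Item 1 concerns edges inside $D$, whereas here $D$ is essentially independent. Your fallback has the same defect, since it again tries to build good $t$-sets inside $D(x,y)$.

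What is missing are the $B_t$-freeness facts that make the charging work, all for a fixed vertex $x$.
\begin{itemize}
\item[(i)] A vertex $z\in N(x)$ lies in $D(x,y)$ for at most $t-1$ vertices $y\notin N[x]$ with $|D(x,y)|\ge t+1$. Otherwise, choose distinct $u_i\in D(x,y_i)\setminus\{z\}$ greedily; the four-cycles $xzy_iu_i$ then form a $B_t$ on the edge $xz$. Hence these $y$ contribute at most $(t-1)d(x)$ in total (Claim~\ref{cla: 212}).
\item[(ii)] For adjacent pairs, $\sum_{y\in N(x)}|D(x,y)|=2e(G[N(x)])$. Moreover $G[N(x)]$ contains no spider with $t$ legs of length $2$, since its centre and $x$ would span a $B_t$ (Claim~\ref{cla: 211}). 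The Erd\H{o}s--S\'os-type bound for spiders then gives $e(G[N(x)])\le\frac{2t-1}{2}d(x)$.
\item[(iii)] For $y\notin N[x]$ with $|D(x,y)|=t$, the single window may fail. A Hall's-theorem argument (Claims~\ref{cla: 213} and~\ref{cla: 214}) shows two things. A $t$-set equal to $D(x,y)$ for at least $t$ such $y$ is automatically good. Every vertex of a $t$-set realised by fewer than $t$ such $y$ lies in at most $t-1$ of the size-$t$ sets $D(x,y)$. So these failures cost at most $(t-1)d(x)$.
\end{itemize}
Pairs with $|D(x,y)|\le t-1$ cost nothing. Together with the diagonal term, (i)--(iii) give a loss of at most $(4t-1)d(x)$ per vertex, which sums to the $2(4t-1)e(G)$ term. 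Without (i)--(iii) or equivalent arguments, your proof does not close.
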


        \begin{proof}
            Let $G$ be a $B_t$-free graph.
            A \emph{spider} is a tree with at most one vertex with degree more than $2$, called the \emph{center}.
            A leg of a spider is a path from the center to a vertex of degree $1$.

            \begin{claimthm}\label{cla: 211}
                For any $v \in V(G)$, $G[N(v)]$ does not contain a spider with $t$ legs of length $2$.
            \end{claimthm}
            \begin{claimproof}
                Suppose not. Then there exists a spider with center $u$ and $t$ legs $u w_1 w_1'$, $u w_2 w_2'$, $\dots$, $u w_t w_t'$.
                Then there is a $B_t$ formed by $u w_1 w_1' v$, $u w_2 w_2' v$, $\dots$, $u w_t w_t' v$ sharing one common edge $uv$, a contradiction.
            \end{claimproof}
            A known result for spiders is: If $G$ is a graph on $n$ vertices with $e(G) > \frac{k-1}{2} n$, then $G$ contains every $k$-edge spider that has no leg of length more than $4$ (see \cite{FanThe2007, WozniakOn1996}).
            Combining with Claim \ref{cla: 211}, we have that the number of edges in $G[N(v)]$ is at most $\frac{2t - 1}{2} d(v)$.


            
            Define an auxiliary hypergraph $\mathcal{H}_v$.
            The vertex set of $\mathcal{H}_v$ is $V(G) \setminus \{v\}$, and the hyperedge set is $\{h: h = (N(v) \cap N(w)) \cup \{w\}, w \in V(G) \setminus N[v]\}$.
            By the definition of $\mathcal{H}_v$, each hyperedge of size $s + 1$ corresponds to $s$ edges incident to vertices in $N(v)$.
            Then we have
            \begin{equation}\label{eq: sum of dv}
                \sum_{u \in N(v)} d(u) = d(v) + 2e(G[N(v)]) + \sum_{h \in E(\mathcal{H}_v)} (|h| - 1).
            \end{equation}
            where the first item on the right is the number of edges between $v$ and $N(v)$, the second item is the number of edges in $G[N(v)]$, and the third item is the number of edges between $N(v)$ and $V(G) \setminus N[v]$.

            Let $\mathcal{H}_{v, 1}$ denote the subhypergraph of $\mathcal{H}_v$ consisting of hyperedges of size at most $t$, $\mathcal{H}_{v, 2}$ denote the subhypergraph of $\mathcal{H}_v$ consisting of hyperedges of size $t+1$, and $\mathcal{H}_{v, 3}$ denote the subhypergraph of $\mathcal{H}_v$ consisting of hyperedges of size at least $t+2$.
            Thus $\sum_{h \in E(\mathcal{H}_v)} (|h| - 1) = \sum_{i = 1}^{3} \sum_{h \in E(\mathcal{H}_{v, i})} (|h| - 1)$.

            \begin{claimthm}\label{cla: 212}
                For each vertex $u \in N(v)$, it is contained in at most $t-1$ hyperedges of $\mathcal{H}_{v, 3}$.
            \end{claimthm}
            \begin{claimproof}
                Suppose not. Then there are at least $t$ hyperedges of $\mathcal{H}_{v, 3}$, namely $h_1, h_2, \dots, h_t$, containing $u$.
                Let $w_i$ be the vertex in $h_i$ but not in $N(v)$.
                Since the size of $h_i$ is at least $t+2$, we can choose vertices $u_i \neq w_i$ from $h_i$ greedily such that $u_i \neq u_j$ for $1 \leq i, j \leq t$.
                Then there are $t$ copies of $C_4$, namely $v u w_1 u_1$, $v u w_2 u_2$, \dots, $v u w_t u_t$, which share a common edge $vu$ and form a $B_t$, a contradiction.
            \end{claimproof}
            Thus we have 
            \begin{equation}\label{eq: sum of edge size}
                \sum_{h \in E(\mathcal{H}_{v, 3})} (|h| - 1) = \sum_{u \in N(v)} |\{h \in E(\mathcal{H}_{v, 3}): u \in h\}| \leq (t-1) d(v).
            \end{equation}

            For each hyperedge $h$ in $\mathcal{H}_{v, 1} \cup \mathcal{H}_{v, 2}$, the size of $h \cap N(v)$ is at most $t$.

            \begin{claimthm}\label{cla: 213}
                Let $S \subseteq N(v)$ be a set of size $t$.
                If $S$ is contained in at least $t$ hyperedges of $\mathcal{H}_{v, 2}$, then no vertex in $S$ belongs to any other hyperedge $g \in \mathcal{H}_{v, 2}$ with $|g \cap S| \leq t-1$ besides these $t$ hyperedges.
            \end{claimthm}
            \begin{claimproof}
                Suppose not. Let $h_1, h_2, \dots, h_t$ be $t$ hyperedges containing $S = \{u_1, \dots, u_t\}$.
                Assume that $u_1$ is also contained in $h_{t+1}$, $|h_{t+1} \cap S| \leq t-1$.
                Let $w_i = h_i \setminus S$, $u_i \in h_i$ such that $u_i \neq u_j$ for $i \neq j$.
                Then there are $t$ copies of $C_4$ $v u_1 w_2 u_2$, $v u_1 w_3 u_3$, $\dots$, $v u_1 w_t u_t$ and $v u_1 w_{t+1} u_{t+1}$ which form a $B_t$, a contradiction.
            \end{claimproof}

            \begin{claimthm}\label{cla: 214}
                Let $S \subseteq N(v)$ be a set of size $t$.
                If $S$ is contained in at least $1$ and at most $t-1$ hyperedges of $\mathcal{H}_{v, 2}$, then each vertex in $S$ is contained in at most $t-1$ hyperedges of $\mathcal{H}_{v, 2}$.
            \end{claimthm}
            \begin{claimproof}
                Suppose not. Let $h_1, h_2, \dots, h_{t_1}$ be $t_1$ hyperedges containing $S = \{u_1, u_2, \dots, u_t\}$, where $1 \leq t_1 \leq t-1$.
                Assume that $u_1$ is also contained in $h_{t_1 + 1}, \dots, h_t$.
                Let $w_i = h_i \setminus N(v)$.

                Consider the bipartite graph $G[W, N(v) \setminus \{u_1\}]$ induced by $W$ and $N(v) \setminus \{u_1\}$, where $W = \{w_1, w_2, \dots, w_t\}$.
                Each vertex in $W$ has $t - 1$ neighbors in $N(v) \setminus \{u_1\}$.
                Let $W' \subseteq W$.
                If $|W'| \leq t-1$, then there are at least $t-1$ vertices adjacent to some vertices in $W'$.
                If $|W'| = t$, since there are at least $t-1$ vertices adjacent to $\{w_1, w_2, \dots, w_{t_1}\}$, the set $S$ is contained in at least $1$ and at most $t-1$ hyperedges of $\mathcal{H}_{v, 2}$.
                Moreover, $h_{t_1+1}, \dots, h_t$ are different from $h_1, \dots, h_{t_1}$. 
                Therefore, there are at least $t$ vertices adjacent to some vertices in $W'$.
                Thus, by Hall's theorem \cite{HallOn1935}, there exist $\{x_1, x_2, \dots, x_t\} \subseteq N(v)$ such that $x_i \in h_i$, $1 \leq i \leq t$.
                Then there are $t$ copies of $C_4$ $v u_1 w_1 x_1$, $v u_1 w_2 x_2$, $\dots$, $v u_1 w_{t-1} x_{t-1}$ and $v u_1 w_t x_t$ which form a $B_t$, a contradiction.
            \end{claimproof}
            \noindent We set 
            $$E_2 = \{h \in E(\mathcal{H}_{v, 2}): 1 \leq |\{g \in \mathcal{H}_{v, 2}: h \cap N(v) \subseteq g\}| \leq t-1\}.$$
            This means $\sum_{h \in E_2} (|h| - 1) \leq (t-1)d(v).$
            The $d(S) - 1$ is the number of common neighbors of $S$ besides $v$.
            Suppose that there are $x \leq n$ hyperedges $h$ of $\mathcal{H}_{v, 2}$ in $E(\mathcal{H}_{v, 2}) \setminus E_2$.
            Thus $\sum_{h \in E(\mathcal{H}_{v, 2}) \setminus E_2} (|h| - 1)  \leq  \sum_{\substack{S \subseteq N(v), |S| = t, S \text{ is good}}} (d(S) - 1) + (t-1)x$.
            There are at most $n-x$ hyperedges in $\mathcal{H}_{v, 1}$, so $\sum_{h \in E(\mathcal{H}_{v, 1})} (|h| - 1) \leq (t-1)(n - x)$.
            Thus 
            \begin{equation*}
                \sum_{i = 1}^{2} \sum_{h \in E(\mathcal{H}_{v, i})} (|h| - 1) \leq (t-1)d(v) + (t-1)n + \sum_{\substack{S \subseteq N(v), |S| = t, \\ S \text{ is good}}} (d(S) - 1).
            \end{equation*}
            
            Combining with Equation (\ref{eq: sum of dv}) and Equation (\ref{eq: sum of edge size}) we have
            \begin{equation*}
                \begin{aligned}
                    \sum_{\substack{S \subseteq N(v), |S| = t \\ S \text{ is good}}} (d(S) - 1)
                    \geq& \sum_{u \in N(v)} d(u) - 2t d(v) - d(v) - (t-1) d(v) - (t-1) d(v) - (t-1) n \\
                    =& \sum_{u \in N(v)} d(u) - (4t-1) d(v) - (t-1)n.
                \end{aligned}
            \end{equation*}
            Summing over all $v \in V(G)$, we have
            \begin{equation*}
                \begin{aligned}
                    & \sum_{v \in V} \sum_{\substack{S \subseteq N(v), |S| = t \\ S \text{ is good}}} (d(S) - 1) \\
                    \geq& \sum_{v \in V} \left(\sum_{u \in N(v)} d(u) - (4t-1) d(v) - (t-1)n \right) \\
                    =& \sum_{u \in V} d(u)^2 - 2(4t-1) e(G) - (t-1) n^2.
                \end{aligned}
            \end{equation*}
            On the other hand, each set $S$ of size $t$ is contained in $d(S)$ neighborhoods. So
            \begin{equation*}
                \sum_{\substack{S \subseteq V, |S| = t \\ S \text{ is good}}} d(S)(d(S) - 1) = \sum_{v \in V} \sum_{\substack{S \subseteq N(v), |S| = t \\ S \text{ is good}}} (d(S) - 1).
            \end{equation*}
            Thus, we have
            \begin{equation*}
                \sum_{\substack{S \subseteq V, |S| = t \\ S \text{ is good}}} d(S)(d(S) - 1) \geq \sum_{u \in V} d(u)^2 - 2(4t-1) e(G) - (t-1) n^2.\qedhere
            \end{equation*} 
        \end{proof}

        For the $B_t$-free bipartite graph, one can obtain a conclusion similar to Lemma \ref{lem: lower bound of number of C4 for Bt}. 
        The proof is very similar, differing only in some computational details.

        \begin{lemma}\label{lem: lower bound of number of C4 for Bt on bip}
            Let $t$ be a positive integer and $G$ be a $B_t$-free bipartite graph with parts $V_1$ and $V_2$, and $|V_i| = n_i$.
            When $n$ is sufficiently large, we have
            \begin{equation*}
                N_i(C_4, G) \geq \frac{1}{2} \binom{t}{2} \left(\sum_{u \in V_i} d(u)^2 - (2t-1) e(G) - (t-1) n_j^2 \right),
            \end{equation*}
            where $i \neq j$, $i, j \in \{1, 2\}$.
        \end{lemma}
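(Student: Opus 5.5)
The plan is to repeat the argument of Lemma~\ref{lem: lower bound of number of C4 for Bt} with the roles of the two parts fixed, and then convert the resulting codegree sum into a count of copies of $C_4$. Fix $i\neq j$. For every $v\in V_j$ we have $N(v)\subseteq V_i$, and we define the hypergraph $\mathcal{H}_v$ on $V(G)\setminus\{v\}$ as before, with hyperedges $(N(v)\cap N(w))\cup\{w\}$. Since $G$ is bipartite, only $w\in V_j\setminus\{v\}$ give nonempty intersections, so $\mathcal{H}_v$ has at most $n_j$ hyperedges. Moreover $G[N(v)]$ is empty, so identity \eqref{eq: sum of dv} becomes
\[
\sum_{u\in N(v)} d(u)=d(v)+\sum_{h\in E(\mathcal{H}_v)}(|h|-1).
\]
In particular the spider bound (and its $2t\,d(v)$ term) is no longer needed.

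Next we split $\mathcal{H}_v$ into $\mathcal{H}_{v,1},\mathcal{H}_{v,2},\mathcal{H}_{v,3}$ exactly as before. Claims~\ref{cla: 212}, \ref{cla: 213} and~\ref{cla: 214} use only $B_t$-freeness and the hypergraph structure, so they apply verbatim. They give
\[
\sum_{h\in \mathcal{H}_{v,3}}(|h|-1)\le (t-1)d(v),\qquad \sum_{h\in E_2}(|h|-1)\le (t-1)d(v).
\]
The hyperedges of $\mathcal{H}_{v,1}$, together with the overhead of those in $\mathcal{H}_{v,2}\setminus E_2$, contribute at most $(t-1)n_j$, and the remainder is at most $\sum_{S\subseteq N(v),|S|=t,\ S\text{ good}}(d(S)-1)$. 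Hence
\[
\sum_{\substack{S\subseteq N(v),|S|=t\\ S\text{ good}}}(d(S)-1)\ \ge\ \sum_{u\in N(v)}d(u)-(2t-1)d(v)-(t-1)n_j .
\]
Summing over $v\in V_j$ and using $\sum_{v\in V_j}\sum_{u\in N(v)}d(u)=\sum_{u\in V_i}d(u)^2$ and $\sum_{v\in V_j}d(v)=e(G)$, we obtain
\[
\sum_{\substack{S\subseteq V_i,|S|=t\\ S\text{ good}}} d(S)(d(S)-1)\ \ge\ \sum_{u\in V_i}d(u)^2-(2t-1)e(G)-(t-1)n_j^2 .
\]
Here we use that every good $t$-set $S\subseteq V_i$ lies in exactly $d(S)$ of the neighbourhoods $N(v)$, $v\in V_j$.

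Finally we pass to $N_i(C_4,G)$. A good $t$-set $S\subseteq V_i$ together with a pair from $N(S)$ spans $\binom{t}{2}\binom{d(S)}{2}$ copies of $C_4$ whose $V_i$-side lies in $S$. This accounts for the factor $\frac12\binom t2$, since $\binom t2\binom{d(S)}2=\frac12\binom t2 d(S)(d(S)-1)$. The delicate point, which I expect to be the main obstacle, is multiplicity: the same $C_4$ $x\,w\,y\,w'$ might be produced by two different good sets $S\neq S'$, both containing $\{x,y\}$. For $t=2$ this cannot happen, since $S=\{x,y\}$ is forced. For general $t$, I would first try to show that this overlap forces a $B_t$. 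Suppose $\{x,y\}\subseteq S\cap S'$ with $S\neq S'$. Then $|S\cup S'|\ge t+1$, and $x$ and $y$ have at least $t+1$ common neighbours in each of $N(S)$ and $N(S')$. Take $z\in S'\setminus S$ and a common neighbour $w\in N(S)\cap N(S')$ if one exists. The $t$ four-cycles on the edge $xw$ would then be obtained through distinct vertices of $(S\cup S')\setminus\{x\}$, matched by Hall's theorem (as in Claim~\ref{cla: 214}) to distinct common neighbours. If such an embedding goes through, each counted $C_4$ arises from a unique good set and the inequality follows. If it does not, the fallback is to bound the overcount by the structural information already extracted from Claims~\ref{cla: 213} and~\ref{cla: 214}, namely that such $S$ behave like near-disjoint blocks inside each $N(v)$. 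The resulting loss is then absorbed into the lower-order terms $(2t-1)e(G)$ and $(t-1)n_j^2$.
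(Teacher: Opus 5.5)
Up to the codegree inequality $\sum_{S\subseteq V_i,\,|S|=t,\,S\text{ good}} d(S)(d(S)-1)\ge \sum_{u\in V_i}d(u)^2-(2t-1)e(G)-(t-1)n_j^2$, your argument is exactly the paper's. The ingredients are the same: the bipartite version of \eqref{eq: sum of dv}, Claims~\ref{cla: 212} and~\ref{cla: 214}, at most $n_j$ nontrivial hyperedges, and summation over $v\in V_j$. The gap is the last step. To conclude $N_i(C_4,G)\ge\sum_S\binom t2\binom{d(S)}2$, you need each $C_4$ to be produced by at most one good $t$-set, and you leave this conditional (``if such an embedding goes through''). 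Your fallback is not a valid substitute. The stated inequality has no error term. A multiplicity of even $2$ on a positive proportion of copies would cost a constant factor on the main term $\sum_{u\in V_i}d(u)^2$. That loss cannot be absorbed into $(2t-1)e(G)$ or $(t-1)n_j^2$ without a separate count of the overcounted copies.

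Your primary idea does close the gap, and more simply than you suggest. Suppose $C=x\,w\,y\,w'$ is counted from two good sets $S\neq S'$. Then $\{x,y\}\subseteq S\cap S'$ and $w,w'\in N(S)\cap N(S')$, so the common neighbour you ask for always exists and no Hall argument is needed. Take $z\in S'\setminus S$. Using $|N(S)|\ge t+1$, pick distinct $u_s\in N(S)\setminus\{w,w'\}$ for the $t-1$ vertices $s\in S\setminus\{x\}$. Then $x\,w\,z\,w'$ and the cycles $x\,w\,s\,u_s$ ($s\in S\setminus\{x\}$) are $t$ four-cycles through the edge $xw$. Apart from $x$ and $w$ they are vertex-disjoint: $z\notin S$, the $u_s$ avoid $w'$, and bipartiteness separates the $V_i$-vertices from the $V_j$-vertices. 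This is a $B_t$, a contradiction.

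The paper states the equality $N_i(C_4,G)=\sum_S\binom t2\binom{d(S)}2$ without comment in this proof. Its justification is this same argument, given in the paragraph before Lemma~\ref{lem: better lower bound of Bt}. There the extra vertex $u$ should be taken from $X'\setminus X$ rather than $X\setminus X'$.
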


        \begin{proof}
            For a vertex $v \in V_1$, let $\mathcal{H}_v$ be the auxiliary hypergraph defined in the proof of Lemma \ref{lem: lower bound of number of C4 for Bt}.
            Since $G$ is a bipartite graph, we have
            \begin{equation}\label{eq: sum of dv bip}
                \sum_{u \in N(v)} d(u) = d(v) + \sum_{h \in E(\mathcal{H}_v)} (|h| - 1).
            \end{equation}

            Let $\mathcal{H}_{v, i}$ ($1 \leq i \leq 3$) be the subhypergraph of $\mathcal{H}_v$ as defined in the proof of Lemma \ref{lem: lower bound of number of C4 for Bt}.

            As in the proof of Lemma \ref{lem: lower bound of number of C4 for Bt}, we have 
            \begin{equation}\label{eq: lem2.2 1}
                \sum_{h \in E(\mathcal{H}_{v, 3})} (|h| - 1) = \sum_{u \in N(v)} |\{h \in E(\mathcal{H}_{v, 3}): u \in h\}| \leq (t-1) d(v).
            \end{equation}
            Notice that $v \in V_1$,
            by the construction of $\mathcal{H}_v$, each hyperedge in $\mathcal{H}_v$ consists of a vertex in $V_1$ together with its neighbors.
            Thus, there are at most $n_1$ hyperedges in $\mathcal{H}_v$.
            Suppose that there are $x \leq n_1$ hyperedges $h$ of $\mathcal{H}_{v, 2}$ in $E(\mathcal{H}_{v, 2}) \setminus E_2$.
            Thus $\sum_{h \in E(\mathcal{H}_{v, 2}) \setminus E_2} (|h| - 1)  \leq  \sum_{\substack{S \subseteq N(v), |S| = t, S \text{ is good}}} (d(S) - 1) + (t-1)x$.
            There are at most $n_1-x$ hyperedges in $\mathcal{H}_{v, 1}$, so $\sum_{h \in E(\mathcal{H}_{v, 1})} (|h| - 1) \leq (t-1)(n_1 - x)$.
            As in the proof of Lemma \ref{lem: lower bound of number of C4 for Bt}, we have
            \begin{equation}\label{eq: lem2.2 2}
                \sum_{i = 1}^{2} \sum_{h \in E(\mathcal{H}_{v, i})} (|h| - 1) \leq (t-1)d(v) + (t-1)n_1 + \sum_{\substack{S \subseteq N(v), |S| = t, \\ S \text{ is good}}} (d(S) - 1).
            \end{equation}

            Combining Equation (\ref{eq: lem2.2 1}), (\ref{eq: lem2.2 2}) and (\ref{eq: sum of dv bip}), we have
            \begin{equation*}
                \begin{aligned}
                \sum_{\substack{S \subseteq N(v), |S| = t \\ S \text{ is good}}} (d(S) - 1)
                \geq& \sum_{u \in N(v)} d(u) - d(v) - (t-1) d(v) - (t-1) d(v) - (t-1) n_1 \\
                = & \sum_{u \in N(v)} d(u) - (2t-1) d(v) - (t-1) n_1.
                \end{aligned}
            \end{equation*}
            Summing over all $v \in V_1$, we have
            \begin{equation*}
                \begin{aligned}
                    & \sum_{v \in V_1} \sum_{\substack{S \subseteq N(v), |S| = t \\ S \text{ is good}}} (d(S) - 1) \\
                    \geq& \sum_{v \in V_1} \left(\sum_{u \in N(v)} d(u) - (2t-1) d(v) - (t-1) n_1 \right) \\
                    =& \sum_{u \in V_2} d(u)^2 - (2t-1) e(G) - (t-1) n_1^2.
                \end{aligned}
            \end{equation*}
            On the other hand, each set $S$ of size $t$ can be contained in $d(S)$ neighborhoods. So
            \begin{equation*}
                \sum_{\substack{S \subseteq V_2, |S| = t \\ S \text{ is good}}} d(S)(d(S) - 1) = \sum_{v \in V_1} \sum_{\substack{S \subseteq N(v), |S| = t \\ S \text{ is good}}} (d(S) - 1).
            \end{equation*}
            Thus, we have
            \begin{equation}\label{eq: lower bound of number of C4 1}
                \sum_{\substack{S \subseteq V_2, |S| = t \\ S \text{ is good}}} d(S)(d(S) - 1) \geq \sum_{u \in V_2} d(u)^2 - (2t-1) e(G) - (t-1) n_1^2.
            \end{equation}

            For each set $S \subseteq V_2$ of size $t$, any two common neighbors of $S$ together with two vertices in $S$ form a $C_4$.
            Thus
            \begin{equation}\label{eq: sum of dv 2}
                N_2 (C_4, G) = \sum_{\substack{S \subseteq V_2, |S| = t \\ S \text{ is good}}} \binom{d(S)}{2} \binom{t}{2}.
            \end{equation}

            Combining Equation (\ref{eq: lower bound of number of C4 1}) and Equation (\ref{eq: sum of dv 2}), we have
            \begin{equation*}
                N_2(C_4, G) \geq \frac{1}{2} \binom{t}{2} \left(\sum_{u \in V_2} d(u)^2 - (2t-1) e(G) - (t-1) n_1^2 \right).
            \end{equation*}

            Similarly to the above, we obtain $N_1(C_4, G) \geq \frac{1}{2} \binom{t}{2} (\sum_{u \in V_1} d(u)^2 - (2t-1) e(G) - (t-1) n_2^2)$.
            The proof is complete.
        \end{proof}




        For a $B_t$-free graph $G$, let $\mathcal{C} = \{C_4 = u_1 v_1 u_2 v_2: S \text{ is a good set of size } t, \{u_1, u_2\} \subseteq S \text{ or } \{v_1, v_2\} \subseteq S\}$.
        Thus $|\mathcal{C}| = N(C_4, G)$.
        We count the number of copies of $C_4 \in \mathcal{C}$ by first choosing two vertices in a good set, then choosing two common neighbors of this good set.
        A fixed $C_4 = x_1 y_1 x_2 y_2$ cannot be counted twice from the same opposite pair.
        Suppose otherwise. Then there are two distinct good sets $X = \{x_1, x_2, x_3, \dots, x_t\}$ and $X' = \{x_1, x_2, x_3', \dots, x_t'\}$ both containing $x_1$ and $x_2$.
        Let $\{y_1, y_2, y_3, \dots, y_{t+1}\} \subseteq N(X)$ and $u \in X \setminus X'$.
        Then $x_1 y_1 u y_2$, $x_1 y_1 x_2 y_3$, $x_1 y_1 x_3 y_4$, $\dots$, $x_1 y_1 x_t y_{t+1}$ share a common edge $x_1 y_1$ and form a $B_t$.
        So each counted $C_4$ is counted at most once from each opposite pair, hence at most twice in total.
        The following lemma shows that there are a few copies of $C_4 \in \mathcal{C}$ which are counted twice.

        \begin{lemma}\label{lem: better lower bound of Bt}
            Let $t$ be a positive integer and $G$ be a $B_t$-free graph on $n$ vertices. When $n$ is sufficiently large, we have
            \begin{equation*}
                N(C_4, G) + o(n^2) \geq \frac{1}{2} \binom{t}{2} \left(\sum_{u \in V(G)} d(u)^2 - 2(4t-1) e(G) - (t-1) n^2 \right).
            \end{equation*}
        \end{lemma}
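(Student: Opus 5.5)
Combine Lemma \ref{lem: lower bound of number of C4 for Bt} with a double count of $\mathcal{C}$. For each good set $S$ of size $t$, choose an unordered pair $\{u_1,u_2\}\subseteq S$ and an unordered pair $\{v_1,v_2\}\subseteq N(S)$. This gives a copy $u_1v_1u_2v_2\in\mathcal{C}$. The total number of such choices is
\begin{equation*}
M:=\binom{t}{2}\sum_{\substack{S \subseteq V(G), |S| = t \\ S \text{ is good}}} \binom{d(S)}{2} \geq \frac{1}{2}\binom{t}{2}\left(\sum_{u \in V(G)} d(u)^2 - 2(4t-1) e(G) - (t-1) n^2 \right),
\end{equation*}
where the inequality is Lemma \ref{lem: lower bound of number of C4 for Bt}. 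By the discussion preceding the statement, a fixed $C_4$ arises at most once from each of its two opposite pairs, so $M\le N(C_4,G)+D$. Here $D$ is the number of copies counted twice, i.e.\ copies $x_1y_1x_2y_2$ with a good $t$-set $X\supseteq\{x_1,x_2\}$, $y_1,y_2\in N(X)$, and a good $t$-set $Y\supseteq\{y_1,y_2\}$, $x_1,x_2\in N(Y)$. It therefore suffices to show $D=o(n^2)$.

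To bound $D$, first note that for $t=1$ no copy is doubly counted, since a $1$-set contains no pair. So assume $t\ge2$ and exploit $B_t$-freeness. In a doubly counted copy, $X\cup N(X)$ contains a $K_{t,t+1}$ and $Y\cup N(Y)$ contains a $K_{t,t+1}$, and these share the $C_4$. I would show that each edge $x_1y_1$ lies in $O(1)$ (depending on $t$) doubly counted $C_4$'s. Then $D=O(e(G))=O(n^{3/2})=o(n^2)$, using $e(G)=O(n^{3/2})$ from \cite{bradavc2023turan}.

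For this local bound, fix $x_1y_1$. Any doubly counted copy through it has $x_2\in N(Y)\setminus\{x_1\}$ and $y_2\in N(X)\setminus\{y_1\}$. First I show that $X$ is essentially unique given $x_1,y_1$: if two good $t$-sets containing $x_1$ had common neighbourhoods both containing $y_1$, the same $B_t$ construction used above (copies $x_1y_1uy_i$ with $u$ in the symmetric difference) would give a $B_t$ on edge $x_1y_1$. Having the full $K_{t,t+1}$ supplies enough distinct vertices to do this greedily. Consequently $x_2$ ranges over the $t-1$ other elements of $X$, and symmetrically $Y$ is unique given $y_1,x_1$, so $y_2$ ranges over $t-1$ elements. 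Hence the edge $x_1y_1$ lies in at most $(t-1)^2$ doubly counted copies.

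The main obstacle is this uniqueness step. The earlier argument only handles two good sets sharing a pair $\{x_1,x_2\}$; here the sets share only $x_1$, plus the common neighbour $y_1$. I would try to build a $B_t$ on edge $x_1y_1$ by matching distinct vertices of $X\cup X'$ to distinct vertices of $N(X)\cup N(X')$, using Hall's theorem as in Claim \ref{cla: 214}. This is where care is needed, because $N(X)\cap N(X')$ may be large.

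If that fails, fall back to a cruder count. Bound $D$ by the number of pairs (edge $x_1y_1$, good sets) and use that the number of good $t$-sets containing a given vertex with a given common neighbour is bounded, which follows from Claim \ref{cla: 212} and Claim \ref{cla: 213}. Either way we get $D=O(n^{3/2})$, and the inequality of the statement follows.
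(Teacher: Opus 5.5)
\textbf{Verdict: genuine gap.} Your reduction to showing the overcount is $o(n^2)$ matches the paper. Bounding the doubly counted copies through each edge by $O_t(1)$, and hence $D=O_t(e(G))=o(n^2)$, is a more local route than the paper's. The paper instead shows that a doubly counted copy forces $|N(X)|=t+1$, and that the resulting $(2t+1)$-vertex sets $X\cup N(X)$ are pairwise edge-disjoint (Claim \ref{cla: 313}).

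\textbf{The uniqueness step is false.} Take $G=K_{2t+1}$ with $t\ge 2$; it is $B_t$-free because $B_t$ has $2t+2$ vertices. Every $t$-set $X$ is good, with $N(X)=V\setminus X$. So there are $\binom{2t-1}{t-1}$ good $t$-sets containing $x_1$ with $y_1\in N(X)$. Moreover, all $(2t-1)(2t-2)>(t-1)^2$ copies of $C_4$ through $x_1y_1$ are doubly counted. So no Hall-type matching can produce the $B_t$ you hope for, and the bound $(t-1)^2$ is wrong. This is precisely the configuration the paper isolates as a special set.

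\textbf{Your fallback claim is true, but it does not follow from Claims \ref{cla: 212} and \ref{cla: 213}.} Those claims only see common neighbours $w\notin N[y_1]$, through the hypergraph $\mathcal{H}_{y_1}$. Common neighbours of $X$ may instead lie inside $N(y_1)$; in $K_{2t+1}$ the hypergraph $\mathcal{H}_{y_1}$ is empty.

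The missing idea is to use the graph $G_{x_1y_1}$ of Claim \ref{cla: 315}. Its edges are the pairs $pq\in E(G)$ with $p\in N(y_1)\setminus\{x_1\}$ and $q\in N(x_1)\setminus\{y_1\}$, and it has no $t$ independent edges. Hence the vertex set $C$ of a maximum matching satisfies $|C|\le 2t-2$ and meets every edge. Each relevant $X$ gives a complete bipartite subgraph of $G_{x_1y_1}$ between $A=X\setminus\{x_1\}$ and $B=N(X)\setminus\{y_1\}$, with $|B|\ge t$. So $A\subseteq C$ or $B\subseteq C$.
\begin{itemize}
\item If $A\subseteq C$, there are at most $\binom{2t-2}{t-1}$ choices for $A$.
\item If $B\subseteq C$, pick a $t$-set $B^*\subseteq B$, so $B^*\subseteq C$. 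The vertices of $N(y_1)\setminus(B^*\cup\{x_1\})$ adjacent to all of $B^*$ number at most $t-1$, since otherwise matching $t$ of them to $B^*$ gives a $B_t$. They contain $A$, so they are exactly $A$, giving at most $\binom{2t-2}{t}$ choices.
\end{itemize}
Hence there are at most $\binom{2t-2}{t-1}+\binom{2t-2}{t}=\binom{2t-1}{t}$ such $X$, which is tight for $K_{2t+1}$.

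\textbf{The fact you import also fails.} You use that a copy arises at most once from each opposite pair, but this fails for $t\ge 3$. In $K_{2t+1}$ the pair $\{x_1,x_2\}$ lies in $\binom{2t-3}{t-2}\ge 3$ good sets having $y_1,y_2$ as common neighbours. So $D$ should be the full overcount $M-N(C_4,G)$. The same tool bounds it. Two distinct such sets $X\neq X'$ satisfy $|(N(X)\cap N(X'))\setminus\{y_1\}|\le t-1$; otherwise, match $t$ vertices of $(X\cup X')\setminus\{x_1\}$ into this set to get a $B_t$. Therefore each edge lies in $O_t(1)$ overcounted copies, each of multiplicity at most $2\binom{2t-1}{t}$, and $M-N(C_4,G)=O_t(e(G))=o(n^2)$.
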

        \begin{proof}

        Let $\mathcal{C}_4$ be the collection of copies of $C_4 \in \mathcal{C}$ that are counted twice.
        Suppose $x_1y_1x_2y_2 \in \mathcal{C}_4$,
        which means $x_1,x_2$ together with other $t-2$ vertices $x_3,\dots, x_t$ form a good set, and $y_1,y_2$ together with other $t-2$ vertices $y_3,\dots,y_t$ form a good set.
        Here $\{x_3,\dots,x_t\}$ might intersect with $\{y_3,\dots,y_t\}$.
        Let $X = \{x_1, x_2, \dots, x_t\}$, $Y = \{y_1, y_2, \dots, y_t\}$.


        Choose $t+1$ vertices from $N(X)$ including $y_1$ and $y_2$, and denote them by $Z=\{z_1,\dots,z_{t+1}\}$ where $z_1=y_1,z_2=y_2$.
        Moreover, $N(y_1,y_2)\subseteq X\cup Z$.
        Suppose that there is a vertex $u \in N(y_1,y_2)\setminus(X\cup Z)$. Then $x_1 y_2 u y_1$, $x_1 y_2 x_2 z_3$, $x_1 y_2 x_3 z_4$, $x_1 y_2 x_4 z_5$, $\dots$, $x_1 y_2 x_t z_{t+1}$ share a common edge $x_1 y_2$ and form a $B_t$ (see Figure \ref{fig1-1}). 
        And $N(Y)\subseteq X\cup Z.$

        \begin{claimthm}\label{cla: 311}
            $\{y_3,\dots,y_t\}\subseteq X\cup N(X)$.
        \end{claimthm}
        \begin{claimproof}
        Suppose not. Then there exists $y_j\not\in X\cup N(X)$ for some $j\in [3,t]$. We may assume $j=t$.
        Since $d(y_1,y_2,y_t)\geq t+1$ and $N(y_1,y_2)\subset X\cup Z$, there exist a vertex in $Z$, namely $z_{t+1}\in Z$, and $x_i$ such that $z_{t+1},x_i\in N(y_1,y_2,y_t)$.
        If $3 \leq i \leq t$, namely $x_i = x_t$, then $x_t y_2 z_{t+1} y_t$, $x_t y_2 x_1 y_1$, $x_t y_2 x_2 z_3$, $x_t y_2 x_3 z_4$, $x_t y_2 x_4 z_5$, $\dots$, $x_t y_2 x_{t-1} z_t$ share the edge $x_t y_2$ and form a $B_t$ (see Figure \ref{fig1-2}). 
        If $1 \leq i \leq 2$, namely $x_i = x_2$, then $x_2 y_2 z_{t+1} z_t$, $x_2 y_2 x_1 y_1$, $x_2 y_2 x_3 z_3$, $x_2 y _2 x_4 z_4$, $\dots$, $x_2 y_2 x_t z_t$ share the edge $x_2 y_2$ and form a $B_t$.
        \end{claimproof}

        \begin{figure}[htbp]
            \centering
            \subfloat[The common neighbors of $y_1$ and $y_2$ are contained in $X \cup Z$]{
                \includegraphics[scale=0.18]{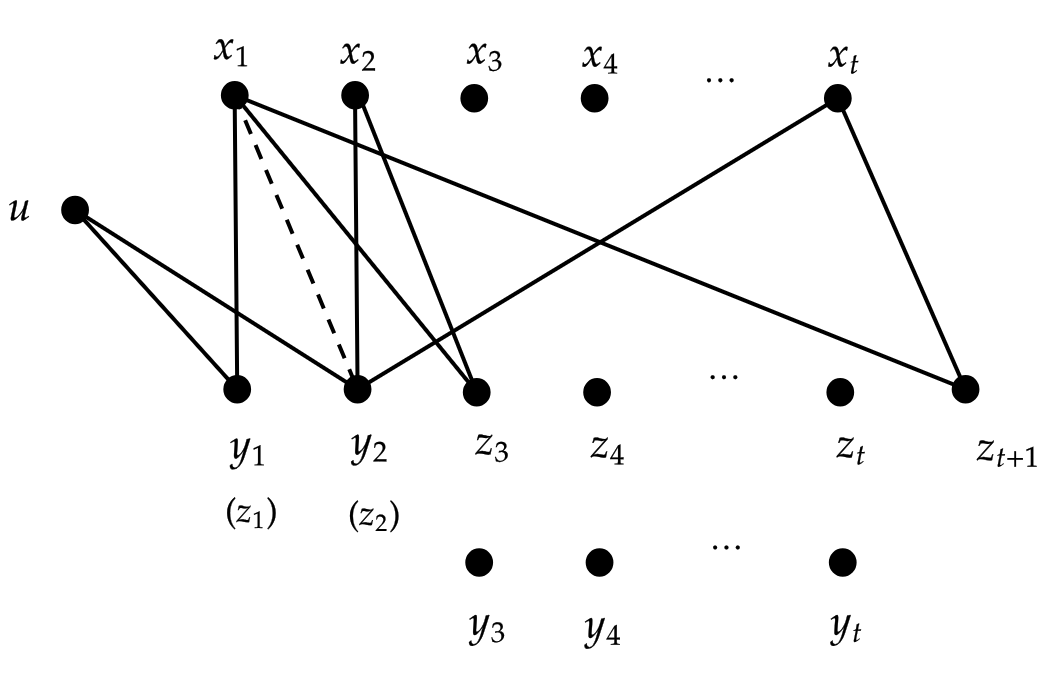} 
                \label{fig1-1}
            }
            \hfill
            \subfloat[$\{y_3, \dots, y_t\}$ is a subset of $X \cup N(X)$]{
                \includegraphics[scale=0.18]{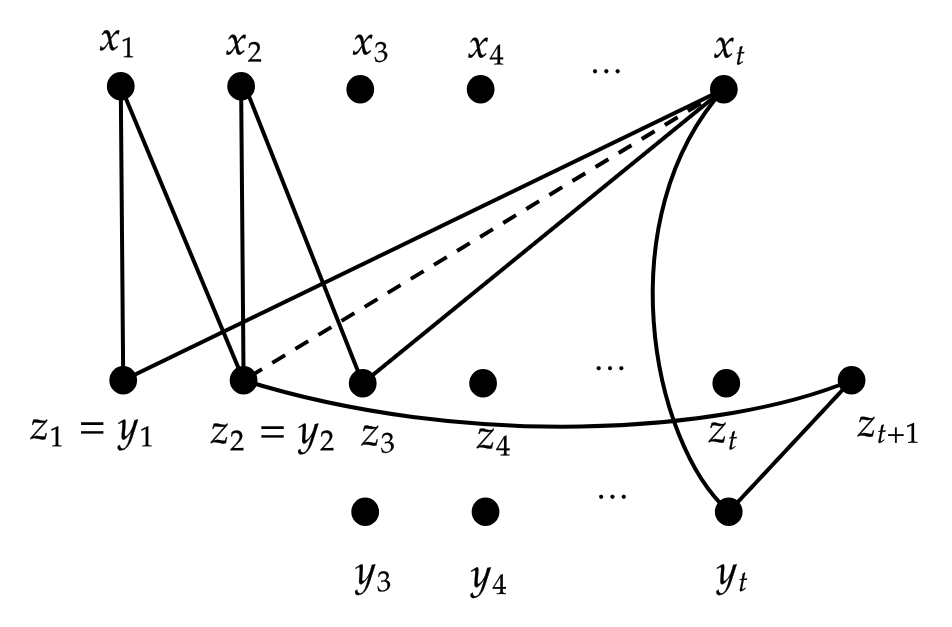} 
                \label{fig1-2}
            }
            \\
            \subfloat[The neighborhood of $X$ is $Z$]{
                \includegraphics[scale=0.18]{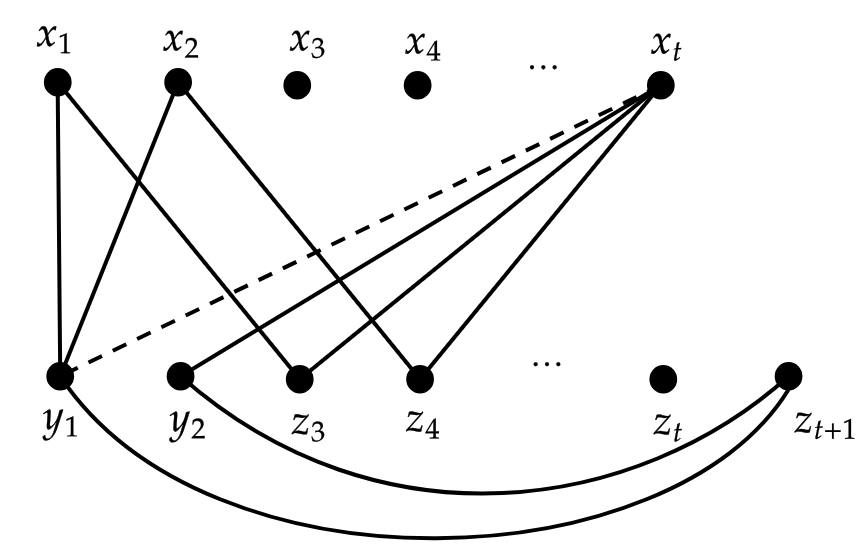} 
                \label{fig1-4}
            }
            \hfill
            \subfloat[For every pair of vertices in a special pair $X$ and $Y$, their common neighbors are contained in $X \cup N(X)$]{
                \includegraphics[scale=0.18]{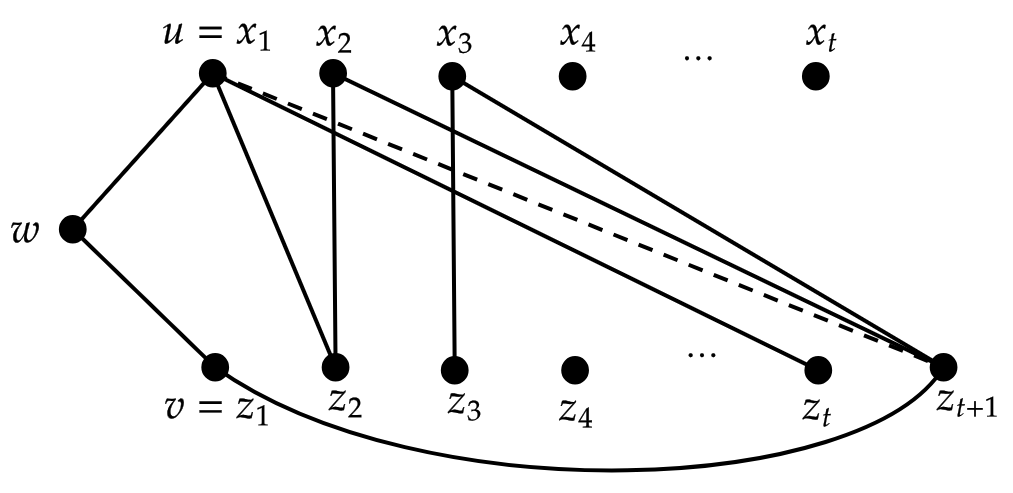} 
                \label{fig1-7}
            }
            \caption{The red edge represents the edge shared by $t$ copies of $C_4$}
        \end{figure}

        Suppose $N(X) \setminus Z \neq \emptyset$ and let $u \in N(X) \setminus Z$.
        Since $d(y_1,y_2)\geq t+1$ and $N(y_1,y_2)\subset X\cup Z$, there exist a vertex in $Z$, namely $z_{t+1}\in Z$, and $x_i$ such that $z_{t+1},x_i\in N(y_1,y_2)$.
        We regard the vertex set $X \cup N(X)$ and the edges between $X$ and $N(X)$ as a complete bipartite graph.
        So these $x_i$s are the same.
        Suppose $x_i = x_t$.
        Then $x_t y_1 z_{t+1} y_2$, $x_t y_1 x_1 z_3$, $x_t y_1 x_2 z_4$, $x_t y_1 x_3 z_5$, $\dots$, $x_t y_1 x_{t-2} z_{t+1}$, $x_t y_1 x_{t-1} u$ share a common edge $x_t y_1$ and form a $B_t$, a contradiction (see Figure \ref{fig1-4}).

        Thus
        \begin{equation*}
            N(X) = Z.
        \end{equation*}

        We call such two good sets $X$ and $Y$ a {\it special pair} and call the $C_4=x_1y_1x_2y_2$ a {\it special $C_4$}.

        \begin{claimthm}\label{cla: 312}
            If two good sets $X$ and $Y$ are a special pair, then for every two vertices $u,v\in X\cup N(X)$, we have $$N(u,v)\subseteq X\cup N(X).$$
        \end{claimthm}
        \begin{claimproof}
            We regard $X\cup N(X)$ and edges between $X$ and $N(X)$ as a complete bipartite graph.
            So these $x_i$s are the same, and these $z_i$ are the same.
            
            If $u,v\in N(X)$. Suppose $u = z_1, v = z_2$ and the common neighbor $w \notin X\cup N(X)$.
            Then $x_1 z_2 w z_1$, $x_1 z_2 x_2 z_3$, $x_1 z_2 x_3 z_4$, $\dots$, $x_1 z_2 x_t z_{t+1}$ share a common edge $x_1 z_2$ and form a $B_t$, a contradiction.
            
            If $u,v\in X$. 
            Suppose that there exists $w\not\in X \cup N(X)$ such that $w\in N(u,v)$.
            Without loss of generality, we may assume $u=x_1$ and $v=x_2$.
            Since $N(y_1,y_2)\subseteq X\cup N(X)$ and $d(y_1,y_2) \geq t+1$, there is a vertex in $N(X)$ adjacent to $y_1$ and $y_2$, namely $z_{t+1}$.
            Then consider the copies of $C_4$: $x_1 y_2 z_{t+1} y_1$, $x_1 y_2 x_2 w$, $x_1 y_2 x_3 z_3$, $x_1 y_2 x_4 z_4$, $\dots$, $x_1 y_2 x_t z_{t}$. They share the edge $x_1y_2$, which forms a $B_t$, a contradiction. 

            Similarly, if $u\in X$ and $v\in N(X)$, then we can also get a contradiction as in the case when $u,v\in X$. For example, there exists a $B_t$ formed by $x_1 z_{t+1} y_1 w$, $x_1 z_{t+1} x_2 y_2$, $x_1 z_{t+1} x_3 z_3$, $\dots$, $x_1 z_{t+1} x_t z_t$ (see Figure \ref{fig1-7}). 
        \end{claimproof}
        For a special pair $X$ and $Y$, we say $\mathcal{V}$ is a special set if $\mathcal{V}=X\cup N(X)$.
        Notice that $\mathcal{V}=X\cup N(X) = Y\cup N(Y)$, since $Y$ is a good set and $N(Y) \subseteq X\cup N(X).$
        Two different special pairs could define the same special set.

        \begin{claimthm}\label{cla: 313}
            Let $\mathcal{V}_1$ and $\mathcal{V}_2$ be two different special sets. Then $G[\mathcal{V}_1]$ and $G[\mathcal{V}_2]$ share no edge.
        \end{claimthm}
        \begin{claimproof}
        Otherwise, suppose that there exists an edge $uv$ in both $G[\mathcal{V}_1]$ and $G[\mathcal{V}_2]$.
        Let $\mathcal{V}_1$ be supported by $X$ and $Y$.

        Case 1. The edge $uv$ is contained in $X$ or $N(X)$.

        Suppose that $uv$ is contained in $X$.
        By Claim \ref{cla: 312}, the common neighbors of $u$ and $v$ are in $\mathcal{V}_2$, which means $N(X)$ is contained in $\mathcal{V}_2$.
        Again by Claim \ref{cla: 312}, the common neighbors of two vertices of $N(X)$ are contained in $\mathcal{V}_2$, which means $X$ is contained in $\mathcal{V}_2$.
        Then we have $\mathcal{V}_1 = \mathcal{V}_2$, a contradiction.
        If $uv$ is contained in $N(X)$.
        Similarly to the above, we can find a contradiction.

        Case 2. The one endvertex of $uv$ is contained in $X$ and the other is contained in $N(X)$.

        Suppose that $u \in X$ and $v \in N(X)$.
        If $v \in Y$, it is easy to see that $d_{G[\mathcal{V}_1]}(v) \geq t+1$.
        If $v \notin Y$, then $v$ must be the common neighbor of $y_1$ and $y_2$, and $v$ is a common neighbor of $X$.
        Thus, $d_{G[\mathcal{V}_1]}(v) \geq t+1$.
        Since the degree of $v$ is at least $t+1$, there is a vertex $w \in N(X)$ adjacent to $v$.
        By Claim \ref{cla: 312}, $w$ is a common neighbor of $u$ and $v$, and $w \in \mathcal{V}_2$.
        Then $vw$ is an edge and is contained in $G[\mathcal{V}_2]$.
        By Case 1, we can find a contradiction.

        From Cases 1 and 2, it follows that no edge is contained in both $G[\mathcal{V}_1]$ and $G[\mathcal{V}_2]$.
        \end{claimproof}

        \begin{claimthm}\label{cla: 314}
            If two different special $C_4$ share at least one edge, then they are from the same special set.
        \end{claimthm}
        \begin{claimproof}
            Suppose $v_1v_2v_3v_4$ and $u_1u_2u_3u_4$ share an edge $e$. If they are in different special sets $\mathcal{V}_1$ and $\mathcal{V}_2$, then by Claim~\ref{cla: 313}, $G[\mathcal{V}_1]$ and $G[\mathcal{V}_2]$ share no edges, a contradiction.
        \end{claimproof}

        For a special set $\mathcal{V}$, the number of edges in $G[\mathcal{V}]$ is at least $t(t+1)$ and at most $\binom{2t+1}{2}$.
        We construct an auxiliary simple hypergraph $\mathcal{H}$ whose vertex set is $E(G)$, and whose hyperedge set is $\{E(G[\mathcal{V}]): \mathcal{V} \text{ is a special set}\}$.
        By Claim \ref{cla: 313}, every two hyperedges in $\mathcal{H}$ share no vertices.
        Since we are forbidding a bipartite graph in $G$, we have $e(G) = o(n^2)$.
        As a result, the number of special sets is at most $\frac{e(G)}{t(t+1)}=o(n^2)$. 
        And the number of special $C_4$ is at most $e(\mathcal{H})\cdot \binom{2t+1}{4} \cdot 4!=o(n^2)$.
        Thus, only $o(n^2)$ copies of $C_4$ are counted twice.
        For each good set $S$, any two common neighbors and two vertices in $S$ form a $C_4$. So
        \begin{equation*}
            N(C_4, G) + o(n^2) \geq \sum_{\substack{S \subseteq V, |S| = t \\ S \text{ is good}}} \binom{d(S)}{2} \binom{t}{2}.
        \end{equation*}
        Together with Lemma \ref{lem: lower bound of number of C4 for Bt}, we have
        \begin{equation*}
            N(C_4, G) + o(n^2) \geq \frac{1}{2} \binom{t}{2} \left(\sum_{u \in V} d(u)^2 - 2(4t-1) e(G) - (t-1) n^2 \right). \qedhere
        \end{equation*}
        \end{proof}

    \section{Proof of Theorem \ref{upper bound of Bt} and Theorem \ref{thm: upper and lower bound of B2}}\label{sec: Bt}

        In this section, we will give the proofs of Theorem \ref{upper bound of Bt} and Theorem \ref{thm: upper and lower bound of B2}.
        Recall that the statement of Theorem \ref{upper bound of Bt} is $\ex(n, B_t) \leq \frac{\sqrt{t}}{2} (1 + o(1)) n^{\frac{3}{2}}$ for sufficiently large $n$.

        \begin{proof}[Proof of Theorem \ref{upper bound of Bt}]
            The case $t = 1$ is the classical $C_4$ bound.
            So we assume that $t \geq 2$.
            Let $G$ be a $B_t$-free graph.
            Choose an edge $uv$ and construct an auxiliary graph $G_{uv}$.
            The vertex set of $G_{uv}$ is $(N(u) \setminus \{v\}) \cup (N(v) \setminus \{u\})$, and the edge set consists of edges with one vertex in $N(u) \setminus \{v\}$ and the other vertex in $N(v) \setminus \{u\}$.

            \begin{claimthm}\label{cla: 315}
                The graph $G_{uv}$ contains no $t$ independent edges.
            \end{claimthm}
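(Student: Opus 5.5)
The plan is to argue by contradiction: a matching of size $t$ in $G_{uv}$ should directly assemble into a copy of $B_t$ whose common edge is $uv$. Suppose $G_{uv}$ contains $t$ pairwise disjoint edges $a_1b_1,\dots,a_tb_t$. By the definition of $G_{uv}$, each such edge has one endpoint in $N(u)\setminus\{v\}$ and the other in $N(v)\setminus\{u\}$. We label them so that $a_i\in N(u)\setminus\{v\}$ and $b_i\in N(v)\setminus\{u\}$. If a vertex lies in both $N(u)$ and $N(v)$, we simply choose whichever orientation the edge allows; any valid choice works.

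Next I will check that each $u\,a_i\,b_i\,v$ is a genuine $C_4$ through the edge $uv$.
\begin{itemize}
\item The edges $ua_i$, $a_ib_i$, $b_iv$ and $vu$ are all present in $G$.
\item The four vertices are distinct: $a_i\neq u$ since $a_i\in N(u)$; $a_i\neq v$ by the definition of the vertex set; $b_i\neq v$ since $b_i\in N(v)$; $b_i\neq u$ by definition; and $a_i\neq b_i$ because $a_ib_i$ is an edge.
\end{itemize}

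Since the edges $a_ib_i$ are pairwise disjoint, the $2t$ vertices $a_1,b_1,\dots,a_t,b_t$ are all distinct, and none of them equals $u$ or $v$. Hence the $t$ four-cycles $u\,v\,b_i\,a_i$ ($1\le i\le t$) pairwise share exactly the edge $uv$ and are otherwise vertex-disjoint. These $2t+2$ vertices and $4t$ wait—$3t+1$ edges therefore form a copy of $B_t=K_2\mathbin{\square}S_t$: take $(u,v)$ as the central edge, with $a_i$ on $u$'s side and $b_i$ on $v$'s side. This contradicts the assumption that $G$ is $B_t$-free.

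The only point that needs care is the bookkeeping of distinctness when the vertex sets $N(u)\setminus\{v\}$ and $N(v)\setminus\{u\}$ overlap. Because $G_{uv}$ is a graph on the union of these two sets, a common neighbour of $u$ and $v$ is a single vertex, so the matching condition already guarantees that it is used at most once. The argument therefore needs no case analysis.
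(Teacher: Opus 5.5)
Your proof is correct and is the paper's argument: $t$ independent edges $a_ib_i$ of $G_{uv}$, with $a_i\in N(u)\setminus\{v\}$ and $b_i\in N(v)\setminus\{u\}$, give $t$ four-cycles $u\,a_i\,b_i\,v$ that meet only in the edge $uv$, hence a copy of $B_t$. Your explicit distinctness check is more careful than the paper's one-line version; just remove the leftover self-correction in the edge count, which should simply read $3t+1$.
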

            \begin{claimproof}
                Suppose that there are $t$ independent edges $u_1 v_1$, $u_2 v_2$, $\dots$, $u_t v_t$ in $G_{uv}$.
                Then there are $t$ copies of $C_4$, namely $uv u_1 v_1$, $uv u_2 v_2$, $\dots$, $uv u_t v_t$, which share a common edge $vu$ and form a $B_t$, a contradiction.
            \end{claimproof}
            For a fixed edge $uv$, let $A = N(u) \setminus (N(v) \cup \{v\})$, $B = N(v) \setminus (N(u) \cup \{u\})$ and $L = N(u) \cap N(v)$.
            Hence the number of copies of $C_4$ containing $uv$ is $e(G_{uv}[A, B]) + e(G_{uv}[A, L]) + e(G_{uv}[B, L]) + 2e(G_{uv}[L]) = e(G_{uv}) + e(G_{uv}[L])$.
            By Claim \ref{cla: 315}, both $G_{uv}$ and $G_{uv}[L]$ have matching number at most $t-1$.            
            By the Tur\'{a}n number of matchings (see \cite{ErdosOn1959}), 
            there are at most $(t-1) (d(u) + d(v))$ $C_4$s containing $uv$.
            Recall the definition of $N(C_4, G)$, which is at most the number of copies of $C_4$ in $G$.
            Summing over all edges, we have
            \begin{equation*}
                4 N(C_4, G) \leq \sum_{uv \in E} (t-1) (d(u) + d(v)) \leq (t-1) \sum_{u \in V} d(u)^2,
            \end{equation*}
            the coefficient $4$ of $N(C_4, G)$ arises because each $C_4$ is counted $4$ times.



        Together with Lemma \ref{lem: better lower bound of Bt} and the upper bound $4N(C_4, G) \leq (t-1) \sum_{u \in V} d(u)^2$, we have
        \begin{equation*}
            \frac{t-1}{4} \sum_{u \in V} d(u)^2 + o(n^2) \geq \frac{1}{2} \binom{t}{2} \left(\sum_{u \in V} d(u)^2 - 2(4t-1) e(G) - (t-1) n^2 \right),
        \end{equation*}
        i.e.,
        \begin{equation*}
            e(G)^2 - \frac{t(4t-1)n}{2(t-1)}e(G) - \frac{t}{4} n^3 - o(n^3) \leq 0.
        \end{equation*}
        Solving this inequality, we have $e(G) \leq \frac{\sqrt{t}}{2} (1+o(1)) n^{\frac{3}{2}}$.
        \end{proof}

        Now we give a better upper bound for $\ex(n, B_2)$.
        Recall that the statement of Theorem~\ref{thm: upper and lower bound of B2} is $\ex(n,B_2) \leq \frac{2}{\sqrt{11}} (1+o(1)) n^{\frac{3}{2}}$ for sufficiently large $n$.

        \begin{lemma}\label{lem: with large min degree}
            Let $G$ be a $B_2$-free graph on $n$ vertices with minimum degree $\delta \geq c n^{\frac{1}{2}}$. Then we have
            \begin{equation*}
                e(G) \leq \frac{-c + \sqrt{c^2 + 32}}{8} (1 + o(1)) n^{\frac{3}{2}}. 
            \end{equation*}
        \end{lemma}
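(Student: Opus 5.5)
The plan is to redo the double count from the proof of Theorem~\ref{upper bound of Bt} for $t=2$, but with a sharper upper bound on the number of $C_4$'s through a fixed edge that uses the minimum degree. First I check what the target inequality is. Write $e(G)=x n^{3/2}$. The claimed bound is exactly the positive root of $4x^2+cx-2=0$. Since $\sum_u d(u)^2\ge 4e(G)^2/n$ by Cauchy--Schwarz, it therefore suffices to prove
\begin{equation*}
\sum_{u\in V(G)} d(u)^2 + \delta\, e(G) \le 2(1+o(1))n^2 .
\end{equation*}
Substituting $\delta\ge c n^{1/2}$ and dividing by $n^3$ then gives $4x^2+cx\le 2+o(1)$.

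\textbf{Lower bound on $N(C_4,G)$.} For $t=2$, Lemma~\ref{lem: better lower bound of Bt} gives
\begin{equation*}
N(C_4,G)+o(n^2)\ \ge\ \tfrac12\Bigl(\sum_u d(u)^2-14e(G)-n^2\Bigr).
\end{equation*}
Because $e(G)=O(n^{3/2})=o(n^2)$, this reads $4N(C_4,G)\ge 2\sum_u d(u)^2-2n^2-o(n^2)$.

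\textbf{Upper bound via the edge $uv$.} In the proof of Theorem~\ref{upper bound of Bt}, each edge $uv$ lies in at most $(t-1)(d(u)+d(v))=d(u)+d(v)$ copies of $C_4$. I would replace this by $\max(d(u),d(v))+O(1)$. By Claim~\ref{cla: 315} with $t=2$, the auxiliary graph $G_{uv}$ has no two independent edges, so it is either a star or a triangle. A triangle has $3$ edges. A star centred at some $w$ has at most $|N(u)\setminus\{v\}|$ or $|N(v)\setminus\{u\}|$ edges, according to which side $w$ is counted on, hence at most $\max(d(u),d(v))$ edges. Using $\max(a,b)=a+b-\min(a,b)\le a+b-\delta$ and summing over all edges gives
\begin{equation*}
4N(C_4,G)\le \sum_{uv\in E}\bigl(d(u)+d(v)-\delta+O(1)\bigr)=\sum_u d(u)^2-\delta e(G)+O(e(G)).
\end{equation*}
Comparing with the lower bound yields $\sum_u d(u)^2+\delta e(G)\le 2n^2+o(n^2)$, which is the target inequality, and the quadratic computation finishes the proof.

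\textbf{Main obstacle: the extra term $e(G_{uv}[L])$.} Here $L=N(u)\cap N(v)$. The number of $C_4$'s through $uv$ is $e(G_{uv})+e(G_{uv}[L])$, because an edge inside $L$ gives two $C_4$'s through $uv$. $G_{uv}[L]$ also has matching number at most $1$, so it is a star or a triangle. If it is a star centred at $w\in L$ with leaves $y_1,\dots,y_k\in L$, then $L$ contributes up to about $2k$ rather than $k$, and the bound $\max(d(u),d(v))+O(1)$ is in danger. I would first try to show directly that $B_2$-freeness forces $k=O(1)$, by finding two $C_4$'s that share one edge and are otherwise vertex-disjoint. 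The natural candidates are an edge such as $wy_1$ or $uw$, using the other leaves $y_2,y_3$ together with $u$ and $v$; I would case-check these configurations. If that fails, the fallback is to bound $\sum_{uv}e(G_{uv}[L])$ globally by $o(n^2)$. Such double-counted configurations contain triangles $u,v,w$ with large common structure, and an argument in the spirit of Claim~\ref{cla: 313} should show these contribute only lower-order terms, which is all the final inequality needs.
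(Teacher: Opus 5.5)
\textbf{The gap is in the per-edge bound, not in the term you flagged.} Your claim that a star in $G_{uv}$ centred at $w$ has at most $\max(d(u),d(v))$ edges holds only when $w\notin L=N(u)\cap N(v)$. If $w\in L$, then $w$ lies in both $N(u)\setminus\{v\}$ and $N(v)\setminus\{u\}$. Its leaves can then lie in $N(u)\setminus N[v]$ and in $N(v)\setminus N[u]$ at once, so $e(G_{uv})$ can be close to $d(u)+d(v)$.

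Here is a counterexample. Take a triangle $uvw$, join $a_1,\dots,a_k$ to $u$ and $w$, and join $b_1,\dots,b_k$ to $v$ and $w$. Every $C_4$ in this graph contains $w$, and any two $C_4$'s sharing an edge also share a third vertex, so the graph is $B_2$-free. Yet $uv$ lies in the $2k$ cycles $uvwa_i$ and $uvb_jw$, while $\max(d(u),d(v))=k+2$. So no edge-by-edge bound of the form $\max(d(u),d(v))+O(1)$ holds, and your inequality $4N(C_4,G)\le\sum_u d(u)^2-\delta e(G)+O(e(G))$ is not justified.

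The issue you did flag is harmless. Suppose $w\in L$ has two neighbours $y_1,y_3\in L$ and $y_2\in L$ is a further vertex. Then $uy_1vy_2$ and $uy_1wy_3$ form a $B_2$, so $e(G[L])\le 3$.

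\textbf{The missing idea is that such bad edges are rare at every vertex,} so the saving of $\delta$ is lost only on a negligible fraction of edges. The paper splits $N(u)$ as follows.
\begin{itemize}
\item $v\in N_1(u)$ when one of the two sides $L'_{u\setminus v}$, $L'_{v\setminus u}$ of $G_{uv}$ (its vertices in $N(u)\setminus N[v]$, resp.\ $N(v)\setminus N[u]$) has at most $n^{1/2}/\log n$ vertices. For these $v$ the star bound is $\max(d(u),d(v))+n^{1/2}/\log n$. The errors sum to $O(e(G)n^{1/2}/\log n)=o(n^2)$.
\item $v\in N_2(u)$ otherwise. Here the centre must lie in $L$ and only the bound $d(u)+d(v)$ is used.
\end{itemize}
A case analysis on the centres of $G_{uv_1}$ and $G_{uv_2}$, where each case produces a $B_2$, shows that the sets $L'_{u\setminus v}$ for $v\in N_2(u)$ are pairwise disjoint subsets of $N(u)$. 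Each has size at least $n^{1/2}/\log n$, so $|N_2(u)|\le d(u)\log n/n^{1/2}$.

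Summing over $u$, the saving is lost on at most $2e(G)\log n/n^{1/2}$ ordered pairs. This costs $o(\delta e(G))$, which is $o(n^2)$ because $\delta\le 2e(G)/n$ and $e(G)=O(n^{3/2})$. With this in place, your target inequality $\sum_u d(u)^2+\delta e(G)\le 2(1+o(1))n^2$ holds. The rest of your outline (the lower bound from Lemma~\ref{lem: better lower bound of Bt}, Cauchy--Schwarz, and the quadratic) then goes through as in the paper.
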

        \begin{proof}
            Let $G$ be a $B_2$-free graph with $\delta \geq c n^{\frac{1}{2}}$.
            Let $G_{uv}$ be the graph consisting of edges with one vertex in $N(u) \setminus \{v\}$ and another vertex in $N(v) \setminus \{u\}$.
            It is easy to see that each edge in $G_{uv}$ together with $uv$ forms a $C_4$.

            \begin{claimthm} \label{twoind}
             The graph $G_{uv}$ contains no two independent edges.
             \end{claimthm}
             
            \begin{claimproof}
                Suppose that there are $2$ independent edges $u_1 v_1$ and $u_2 v_2$ in $G_{uv}$.
                Then, there are two copies of $C_4$, $uv u_1 v_1$ and $uv u_2 v_2$, which share a common edge $uv$ and form a $B_2$, a contradiction.
            \end{claimproof}
            By Claim~\ref{twoind}, we can obtain that $G_{uv}$ is a star or a triangle.

            Let $L_{v \setminus u} = N(v) \setminus N[u]$, $L_{uv} = N(u) \cap N(v)$ and $L_{u \setminus v} = N(u) \setminus N[v]$.
            Let $L_{v \setminus u}' = L_{v \setminus u} \cap V(G_{uv})$, $L_{uv}' = L_{uv} \cap V(G_{uv})$, and $L_{u \setminus v}' = L_{u \setminus v} \cap V(G_{uv})$.
            We set 
            $$N_1 (u) = \{w \in N(u): |L_{w \setminus u}'| \leq \frac{n^{\frac{1}{2}}}{\log n} \text{ or } |L_{u \setminus w}'| \leq \frac{n^{\frac{1}{2}}}{\log n}\},$$ 
            and $N_2 (u) = N(u) \setminus N_1(u)$.
            By Claim~\ref{twoind} and the definition of $N_2(u)$, for each vertex $v \in N_2(u)$, the graph $G_{uv}$ is a star.
            We call the vertex with degree at least $2$ in $G_{uv}$ a center.
            The center must lie in $L_{uv}'$.

            \begin{claimthm}\label{cla: 322}
                For each vertex $v \in N_2(u)$, $L_{u \setminus v}' \subseteq N_1(u)$.
            \end{claimthm}

            \begin{claimproof}
                Let $v \in N_2(u)$.
                Suppose that $w \in L_{u \setminus v}'$ and $w \notin N_1(u)$, which means $|L_{u \setminus w}'| \geq \frac{n^{\frac{1}{2}}}{\log n}$ and $|L_{w \setminus u}'| \geq \frac{n^{\frac{1}{2}}}{\log n}$.
                Let vertex $y$ be the center of star $G_{uw}$, and $z$ be the center of star $G_{uv}$.

                We claim that $y$ must be $z$ or $v$.
                Suppose otherwise. Then $y \notin \{z, v\}$.
                Then $u w x_1 y$ and $u w z v$ form a $B_2$, where $x_1 \in L_{w \setminus u}'$ (see Figure \ref{fig2-1}).

                Suppose that $y = z$.
                Then $u y x_1 w$ and $u y x_2 v$ form a $B_2$, where $x_1 \in L_{w \setminus u}'$ and $x_2 \in L_{v \setminus u}'$ (see Figure \ref{fig2-2}).

                Suppose that $y = v$.
                Then $u w z x_1$ and $u w x_2 v$ form a $B_2$, where $x_1 \in L_{u \setminus v}'$ and $x_2 \in L_{w \setminus u}'$. 

                \begin{figure}[t]
                    \centering
                    \subfloat[If $w \in L_{u \setminus v}'$ and $z \neq y$, then there exists a $B_2$]{
                        \includegraphics[scale=0.5]{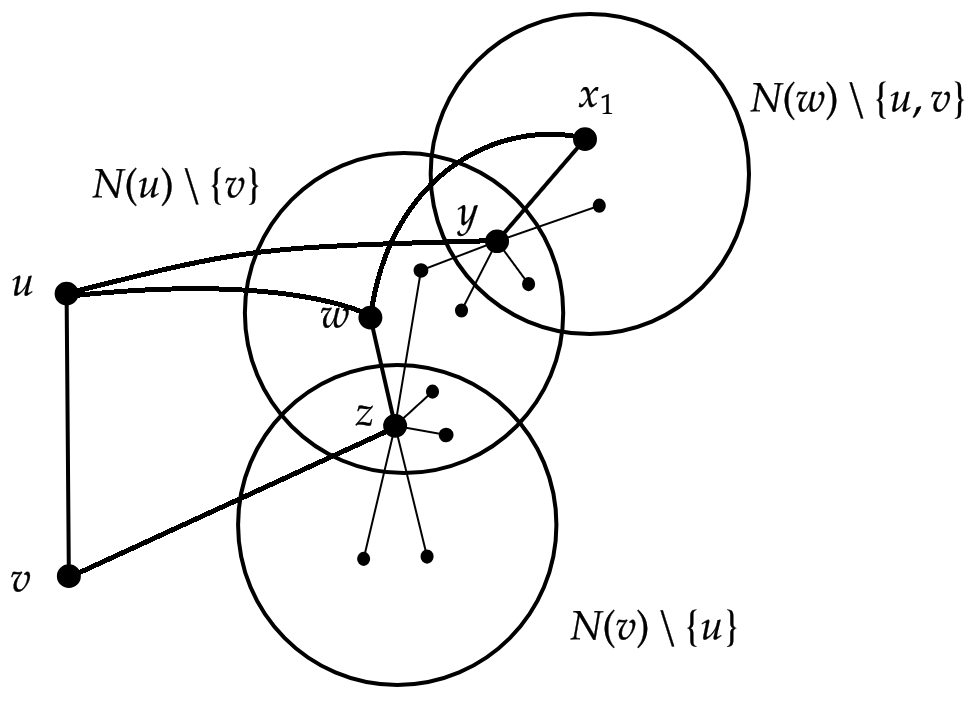} 
                        \label{fig2-1}
                    }
                    \hfill
                    \subfloat[If $w \in L_{u \setminus v}'$ and $z = y$, then there exists a $B_2$]{
                        \includegraphics[scale=0.2]{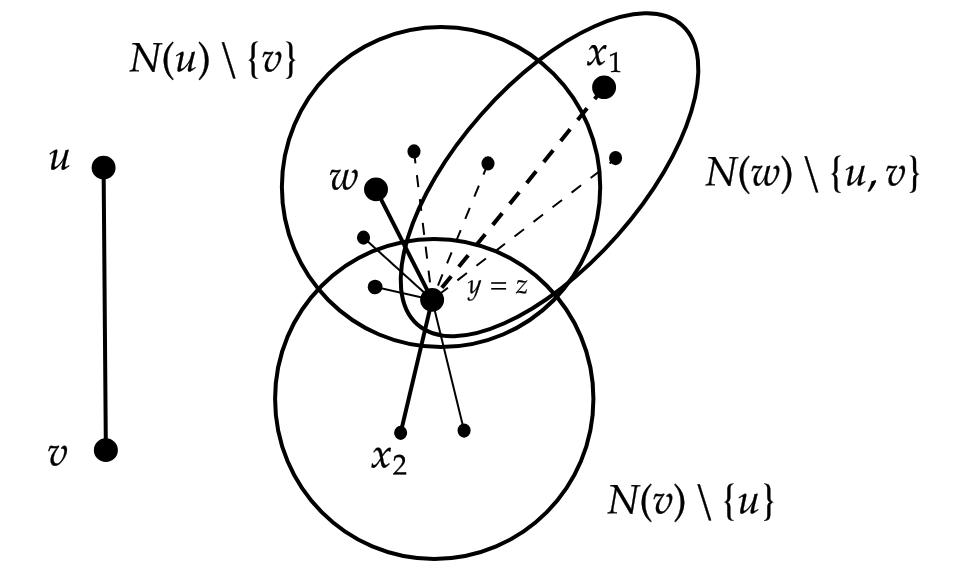} 
                        \label{fig2-2}
                    }
                    \caption{For each vertex $v \in N_2(u)$, if $w \in L_{u \setminus v}'$ and $w \notin N_1(u)$, then we can find a $B_2$}
                \end{figure}

                So, for any vertex $w \in L_{u \setminus v}'$, $w \in N_1(u)$.
            \end{claimproof}

            \begin{claimthm}\label{cla: 323}
                For any two vertices $v_1, v_2 \in N_2(u)$, $L_{u \setminus v_1}' \cap L_{u \setminus v_2}' = \emptyset$.
            \end{claimthm}
            \begin{claimproof}
                Suppose that there is a vertex $w \in L_{u \setminus v_1}' \cap L_{u \setminus v_2}'$.

                Let vertex $y$ be the center of star $G_{u v_1}$, and $z$ be the center of star $G_{u v_2}$.

                Suppose that $y \neq z$.
                If $v_2 \neq y$ and $v_1 \neq z$, then $u v_1 y w$ and $u v_2 z w$ form a $B_2$ (see Figure \ref{fig3-1}).
                If $v_2 = y$ and $v_1 \neq z$, then $u v_1 y w$ and $u w z x_1$ form a $B_2$, where $x_1 \in L_{u \setminus v_2}'$. 
                If $v_2 \neq y$ and $v_1 = z$, then $u v_2 x_1 z$ and $u z w y$ form a $B_2$, where $x_1 \in L_{v_2 \setminus u}'$. 
                If $v_2 = y$ and $v_1 = z$, then $u w v_1 x_1$ and $u w v_2 x_2$ form a $B_2$, where $x_1 \in L_{u \setminus v_2}'$ and $x_2 \in L_{u \setminus v_1}'$ (see Figure \ref{fig3-4}).

                If $y = z$, then $u y x_1 v_1$ and $u y x_2 v_2$ share the edge $uy$ and form a $B_2$, where $x_1 \in L_{v_1 \setminus u}'$ and $x_2 \in L_{v_2 \setminus u}'$.

                \begin{figure}[t]
                    \centering
                    \subfloat[If $w$ is neither the center of $G_{uv_1}$ nor that of $G_{uv_2}$ and $v_2 \neq y$ and $v_1 \neq z$, then there exists a $B_2$]{
                        \includegraphics[scale=0.5]{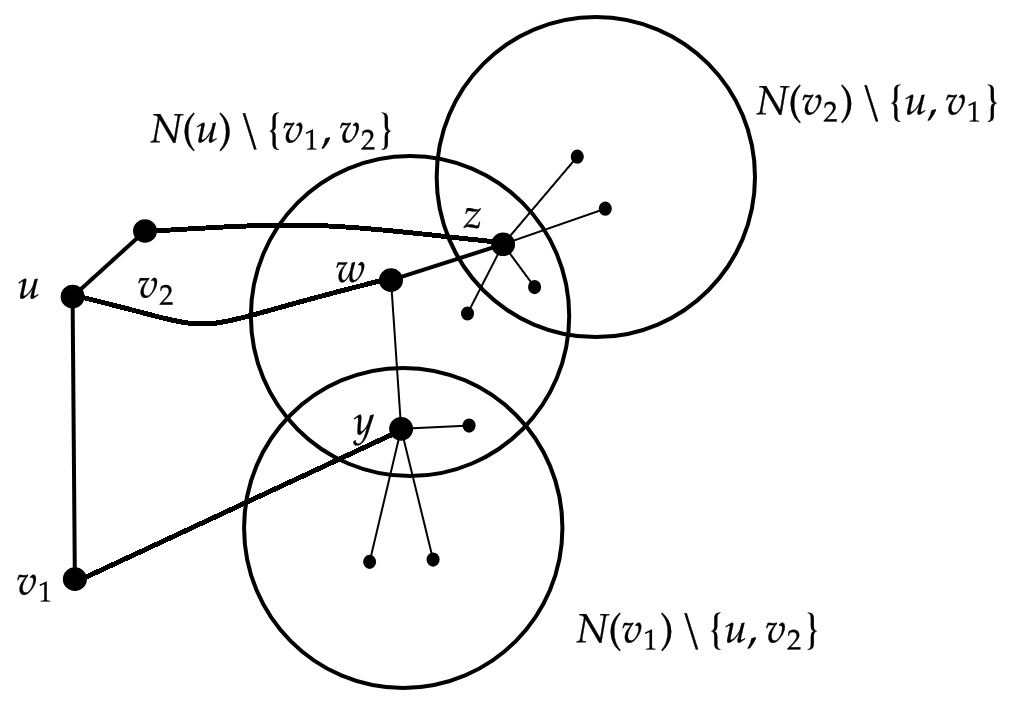} 
                        \label{fig3-1}
                    }
                    \hfill
                    \subfloat[If $w$ is neither the center of $G_{uv_1}$ nor that of $G_{uv_2}$ and $v_2 = y$ and $v_1 = z$, then there exists a $B_2$]{
                        \includegraphics[scale=0.5]{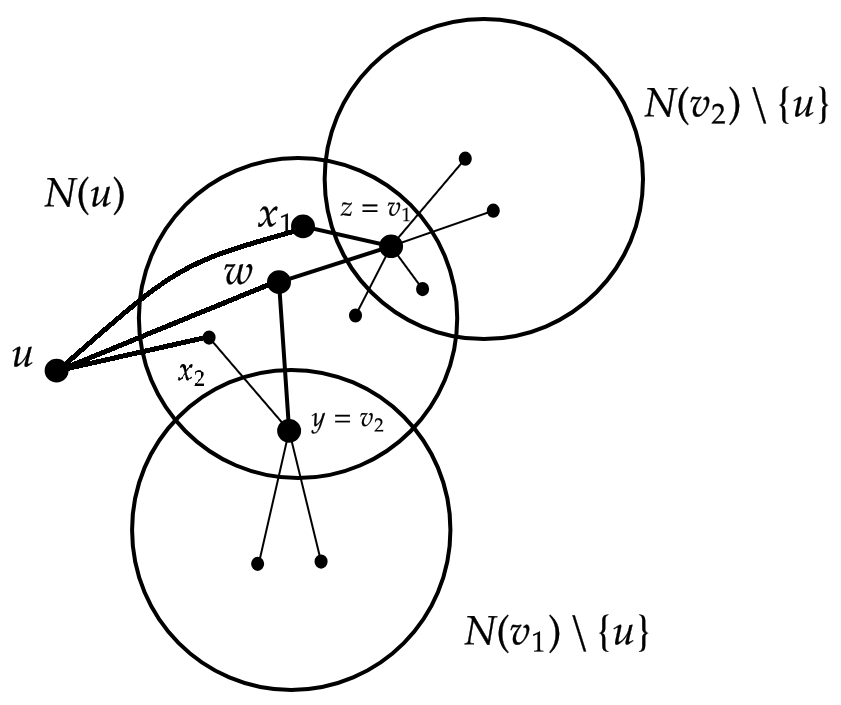} 
                        \label{fig3-4}
                    }
                    \caption{For any two vertices $v_1, v_2 \in N_2(u)$, if $L_{u \setminus v_1}' \cap L_{u \setminus v_2}' \neq \emptyset$, then we can find a $B_2$}
                \end{figure}



                So, $L_{u \setminus v_1}' \cap L_{u \setminus v_2}' = \emptyset$.
            \end{claimproof}

            By Claim \ref{cla: 323}, we have $|N_2(u)| \leq \frac{d(u) \log n}{n^{\frac{1}{2}}}$.

            Fix a vertex $u$. 
            For each neighbor $v$ of $u$, let $\mathcal{C}_{uv} = \{C_4: C_4 \text{ contains $uv$ as an edge}\}$. 
            Then we have
            \begin{align*}
                \sum_{v \in N(u)} |\mathcal{C}_{uv}| 
                \leq& \sum_{v \in N_1(u)} \left(\max \{d(u), d(v)\} + \frac{n^{\frac{1}{2}}}{\log n} \right) + \sum_{v \in N_2(u)} (d(u) + d(v)) \\
                \leq& \sum_{v \in N_1(u)} \left(d(u) + d(v) - \delta + \frac{n^{\frac{1}{2}}}{\log n} \right) + \sum_{v \in N_2(u)} (d(u) + d(v)) \\
                \leq& \sum_{v \in N(u)} (d(u) + d(v)) + \sum_{v \in N_1(u)} \left(- \delta + \frac{n^{\frac{1}{2}}}{\log n} \right).
            \end{align*}

            Summing over all vertices, we have
            \begin{align*}
                \sum_{u \in V(G)} \sum_{v \in N(u)} |\mathcal{C}_{uv}| 
                \leq \sum_{u \in V(G)} \sum_{v \in N(u)} (d(u) + d(v)) + \sum_{u \in V(G)} \sum_{v \in N_1(u)} \left(- \delta + \frac{n^{\frac{1}{2}}}{\log n} \right).
            \end{align*}

            Since $|N_2(u)| \leq \frac{d(u) \log n}{n^{\frac{1}{2}}}$, we have $|N_1(u)| \geq d(u) (1 - \frac{\log n}{n^{\frac{1}{2}}})$.
            Thus $\sum_{u \in V(G)} \sum_{v \in N_1(u)} (- \delta) \\ \leq 2e (1 - \frac{\log n}{n^{\frac{1}{2}}}) (- \delta)$.
            Thus, we have
            \begin{align*}
                \sum_{u \in V(G)} \sum_{v \in N(u)} |\mathcal{C}_{uv}|
                \leq& 2 \sum_{u \in V(G)} d(u)^2 - 2e(G) \left(1 - \frac{\log n}{n^{\frac{1}{2}}} \right) \delta + 2e(G) \frac{n^{\frac{1}{2}}}{\log n}.
            \end{align*}

            For each $C_4$ $v_1 v_2 v_3 v_4$, there are $4$ choices to fix a vertex $v_i$, and for each $v_i$ there are $2$ choices to choose a neighbor.
            So, each $C_4$ is counted $8$ times.
            Recall the definition of $N(C_4, G)$, which is at most the number of copies of $C_4$ in $G$.
            Thus, we have
            \begin{equation*}
                8 N(C_4, G) \leq 2 \sum_{u \in V(G)} d(u)^2 - 2e(G) \left(1 - \frac{\log n}{n^{\frac{1}{2}}}\right) \delta + 2e(G) \frac{n^{\frac{1}{2}}}{\log n}.
            \end{equation*}

            By Lemma \ref{lem: better lower bound of Bt}, we have
            \begin{equation*}
                4 e(G)^2 -28 n e(G) - \left[\left(\frac{\log n }{n^{\frac{1}{2}}} - 1\right) \delta + \frac{n^{\frac{1}{2}}}{\log n}\right]n e(G) -2 n^3 + o(n^3) \leq 0.
            \end{equation*}
            Solving this inequality, we have $e(G) \leq \frac{-c + \sqrt{c^2 + 32}}{8} (1 + o(1)) n^{\frac{3}{2}}$.
        \end{proof}

        \begin{proof}[Proof of Theorem \ref{thm: upper and lower bound of B2}]
            Let $G$ be a $B_2$-free graph on $n$ vertices.
            Let $c=\frac{3}{\sqrt{11}}$, we have $\frac{-c + \sqrt{c^2 + 32}}{8} = \frac{2c}{3}$.

            Run the following deletion process: when the current graph has $j$ vertices, delete a vertex of degree $< c \sqrt{j}$, if one exists.
            Let the final graph be $G'$ with $n'$ vertices.
            Then we have $e(G) \leq e(G') + c\sum_{j=n'+1}^{n} \sqrt{j} = e(G') + \frac{2}{3} c (n^{\frac{3}{2}} - n'^{\frac{3}{2}}) + O(n)$.
            
            
            If $n' \leq n^{\frac{1}{2}}$, then we have
            $$e(G) \leq e(G') + \frac{2}{3} c (n^{\frac{3}{2}} - n'^{\frac{3}{2}}) + O(n) \leq \binom{n'}{2}+ \frac{2}{3} c (n^{\frac{3}{2}} - n'^{\frac{3}{2}}) + O(n) \leq o(n^{\frac{3}{2}})+\frac{2}{\sqrt{11}}n^{\frac{3}{2}}.$$
            
            If $n'> n^{\frac{1}{2}}$, then since $\delta(G') \geq c(n')^{\frac{1}{2}}$, by Lemma \ref{lem: with large min degree}, we have
            \begin{equation*}
                e(G') \leq \frac{-c + \sqrt{c^2 + 32}}{8} (1 + o(1)) (n')^{\frac{3}{2}} = \frac{2c}{3} (1+o(1)) (n')^{\frac{3}{2}},
            \end{equation*}
            and
            \begin{align*}
                e(G) \leq& e(G') + \frac{2c}{3}(n^{\frac{3}{2}}-n'^{\frac{3}{2}}) + O(n) \\
                \leq& \frac{2c}{3} (1 + o(1)) (n')^{\frac{3}{2}}+\frac{2c}{3}(n^{\frac{3}{2}}-n'^{\frac{3}{2}})+O(n) \\
                \leq& \frac{2c}{3}(1+o(1))n^{\frac{3}{2}} = \frac{2}{\sqrt{11}}(1+o(1))n^{\frac{3}{2}}.
            \end{align*}
                       
            Thus, we complete the proof of the upper bound of $\ex(n,B_2)$.
        \end{proof}

    \section{The proof of Theorem \ref{thm: upper bound of K2 cross Tree}}\label{sec: tree}

        In this section, we will give the proof of Theorem~\ref{thm: upper bound of K2 cross Tree}.
        In order to find a $K_2 \mathbin{\square} T$, we would like to assume that for each edge $xy$, there are at least $t$ neighbors $z$ of $y$ such that $d(x, z) \geq t$. We will use a deletion procedure which was employed in~\cite{bradavc2023turan}, but we will aim for a small leading coefficient through a careful analysis. 
        The following lemma allows us to find a subgraph with the property mentioned above. 

\begin{lemma} \label{embedlemma}
Let $T$ be a tree on $t$ vertices and $G$ be a bipartite graph with parts~$X$ and~$Y$ and at least one edge. Assume that for every edge $xy \in E(G)$ with $x\in X$ and $y\in Y$ the vertex $y$ has at least $t$ neighbors $z$ with $d(\{x,z\}) \ge t$. Then $G$ contains $K_2 \mathbin{\square} T$ as a subgraph.
\end{lemma}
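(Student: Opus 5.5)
We will embed $K_2 \mathbin{\square} T$ greedily, building the two copies of $T$ simultaneously, one vertex of $T$ at a time along a BFS order. Label the vertices of $T$ as $a_1,\dots,a_t$ so that each $a_i$ ($i\ge 2$) has exactly one neighbour $a_{p(i)}$ with $p(i)<i$. We will construct injective maps $i\mapsto x_i$ and $i\mapsto y_i$ with all $2t$ images distinct, $x_iy_i\in E(G)$ for every $i$ (the ``rungs''), and $x_ix_{p(i)},\,y_iy_{p(i)}\in E(G)$ (the two tree copies). Since $G$ is bipartite, we keep the parity invariant that $x_i$ lies on the side of $X$ or $Y$ according to the parity of the depth of $a_i$ in $T$, and $y_i$ lies on the opposite side. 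Because the hypothesis is asymmetric in $X$ and $Y$, we state the working condition symmetrically: for a rung $x_iy_i$ we regard whichever endpoint lies in $X$ as ``$x$'' and the one in $Y$ as ``$y$''.

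Start with any edge $x_1y_1$, with $x_1\in X$ and $y_1\in Y$. Now suppose $a_i$ is to be attached to $a_j=a_{p(j)}$ with $j<i$. Write the rung $x_jy_j$ as $xy$ with $x\in X$ and $y\in Y$. By hypothesis, $y$ has at least $t$ neighbours $z$ with $d(\{x,z\})\ge t$. Each such $z$ lies in $X$, and $xz$ together with a common neighbour $w$ of $x$ and $z$ gives a $C_4$ $x\,y\,z\,w$ containing the rung $xy$. The new rung will be $zw$: set the image of $a_i$ on $y$'s side to be $z$ and the image on $x$'s side to be $w$. Then $zy$ and $wx$ are the two tree edges and $zw$ is the new rung, so the parity invariant is preserved automatically.

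The main obstacle is keeping the images distinct. At most $2t-2$ vertices have been used before step $i$, roughly $t-1$ on each side. The candidates $z$ all lie in $X$, the same side as $x$; among the at least $t$ candidates, at most $t-2$ used vertices other than $x$ itself can lie in $X$. So some unused $z$ survives, and we need to check that $z\neq x$ is automatic, since $z$ is a neighbour of $y$ distinct from $x$ in the hypothesis (or else we discard $z=x$, leaving $t-1$ candidates against at most $t-2$ used vertices). Next, $w$ must be chosen from $N(\{x,z\})\setminus\{y\}$, which has at least $t-1$ elements, all in $Y$. At most $t-2$ used $Y$-vertices other than $y$ can block it, so an unused $w$ exists. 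Because the count is this tight, the bookkeeping has to be done carefully: count only the used vertices on the relevant side, excluding the current rung's endpoint that is automatically not a candidate.

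When $y_j\in X$ instead, the roles swap: the new rung is still obtained from the $C_4$ through $xy$, but $z$ now becomes the image on the $x_j$ side. The edges $zy$ and $wx$ continue to supply both tree edges, so the argument is symmetric. After $t$ steps we have two disjoint copies of $T$ joined by the $t$ rungs, which is $K_2\mathbin{\square} T$.
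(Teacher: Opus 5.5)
Your proof is correct and essentially the same as the paper's. Both embed greedily along a tree ordering. Each new rung comes from a $C_4$ through the parent rung: $z$ is an unused neighbour of the $Y$-endpoint with large codegree, and $w$ is an unused common neighbour of the $X$-endpoint and $z$. The count works because each rung has one endpoint in each part, so exactly $i-1\le t-1$ vertices are used on each side (not just ``roughly''). The differences are cosmetic: you use a fixed per-copy labelling whose sides alternate with depth, whereas the paper writes every rung as $x_ry_r$ with $x_r\in X$. There is also a typo, $a_{p(j)}$ for $a_{p(i)}$.
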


\begin{proof}
Take an ordering $v_1,v_2,\dots,v_t$ of the vertices of $T$ so that for each $v_i$ with $i>1$ we have that $v_i$ is adjacent to exactly one vertex of $\{v_1,v_2,\dots,v_{i-1}\}$ in $T$.  Suppose $K_2 \mathbin{\square} T[\{v_1,v_2,\dots,v_s\}]$ has been embedded and $v_{s+1}$ is adjacent to $v_r$, $r<s$, in $T$. At this stage $s$ vertices of $X$ and $s$ vertices of $Y$ have appeared in the embedding. Let $x_r y_r$ be the edge corresponding to $v_r$ in the embedding in $G$.  Then since $s<t$, $y_r$ has a neighbor $z$ in $X$ which is not yet present in the embedding, and $d(x_r,z)\ge t$ so $x_r$ and $z$ have a common neighbor $y$ which is not yet present in the embedding.  Extending the embedding of $K_2 \mathbin{\square} T[\{v_1,v_2,\dots,v_s\}]$ with the edges $x_r y$, $zy$, $z y_r$ yields an embedding of  $K_2 \mathbin{\square} T[\{v_1,v_2,\dots,v_{s+1}\}]$.
\end{proof}

        We restate Theorem~\ref{thm: upper bound of K2 cross Tree} for convenience.

        \begin{theorem}\label{thm: upper bound of K2 cross Tree Restate}
            Let $T$ be a tree with $t$ vertices.
            Then
            $$\ex_{bip}(n,K_2 \mathbin{\square} T)\leq \frac{\sqrt{t-1}}{2\sqrt{2}}(1+o(1)) n^{\frac{3}{2}}.$$
        \end{theorem}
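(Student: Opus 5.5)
We run a deletion procedure to reach a subgraph satisfying the hypothesis of Lemma~\ref{embedlemma}, and bound the deleted edges by the bipartite $C_4$-counting (Reiman-type) argument. Let $G$ be an $n$-vertex $K_2\mathbin{\square}T$-free bipartite graph with parts $X,Y$.

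Call a pair $\{x,z\}\subseteq X$ \emph{heavy} if $d(\{x,z\})\ge t$, and \emph{light} otherwise. For an edge $xy$ with $x\in X$, say $xy$ is \emph{bad} if $y$ has fewer than $t$ neighbors $z$ with $\{x,z\}$ heavy. The procedure repeatedly deletes a bad edge, recomputing codegrees in the current graph, until none remains. By Lemma~\ref{embedlemma}, the final graph must be empty, so every edge of $G$ is deleted at some step. The task is to bound the number $e(G)$ of deletions.

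We charge each deletion to paths of length two. When $xy$ is deleted, $y$ has current degree $d'(y)$, and all but at most $t-1$ of its other current neighbors $z$ form light pairs $\{x,z\}$. So the deletion destroys at least $d'(y)-t$ cherries $x y z$ with center $y$ and light pair $\{x,z\}$, where lightness is measured in the current graph. Since codegrees only decrease, a pair light at some moment stays light afterwards. For each light pair $\{x,z\}$, we count only cherries through it that are destroyed after it became light. At the moment $\{x,z\}$ becomes light it has at most $t-1$ surviving common neighbors, so at most $t-1$ such cherries are ever charged to it. This gives
\[
\sum_{\text{deletions } xy}\bigl(d'(y)-t\bigr)\le (t-1)\binom{|X|}{2}.
\]
Symmetrically, we want the same with the roles of $X$ and $Y$ swapped. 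To get it, we apply Lemma~\ref{embedlemma} with $X$ and $Y$ interchanged: declare an edge bad if it is bad in either orientation, and delete it. Each deletion then charges one side.

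The sum $\sum d'(y)$ is handled by a standard trick: ordering the deletions at $y$, the current degrees are $d(y),d(y)-1,\dots,1$, so the sum over deletions at $y$ is about $d(y)^2/2$. The charging bound then reads roughly
\[
\tfrac12\sum_{y\in Y} d(y)^2 \lesssim (t-1)\tfrac{|X|^2}{2}+t\,e(G),
\]
or the analogous statement for the deletions charged to $X$. By Cauchy--Schwarz, with $e_Y$ the number of deletions charged to the $Y$ side, we get $e_Y^2/|Y|\lesssim (t-1)|X|^2$, hence $e_Y\lesssim\sqrt{t-1}\,|X|\sqrt{|Y|}$. The difficulty is that each deleted edge is charged to only one side, so we cannot use full degrees on both sides.

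The main obstacle is combining the two sides sharply to reach $\frac{\sqrt{t-1}}{2\sqrt2}n^{3/2}$. For $|X|=|Y|=n/2$ and $t=2$, the target is the Reiman bound. I would set up the computation so that the edges charged to $Y$ satisfy
\[
\sum_{y}\binom{d_Y(y)}{2}\le (t-1)\binom{|X|}{2}+O(tn),
\]
where $d_Y(y)$ counts deletions at $y$ charged to $Y$, and similarly for $X$. This yields $e_Y\le\sqrt{t-1}\,|X|\sqrt{|Y|}/\sqrt{1}\cdot(1+o(1))$ after Reiman's optimization. Summing gives at most $\sqrt{t-1}\bigl(|X|\sqrt{|Y|}+|Y|\sqrt{|X|}\bigr)$, which is too large by a factor of $2$. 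So the first thing to try is the single-orientation version: use only badness from $Y$ (every edge $xy$ with $x\in X$), which Lemma~\ref{embedlemma} already permits in one orientation. Then all $e(G)$ deletions are charged to $Y$, and Reiman's inequality
\[
\sum_y\binom{d(y)}{2}\le (t-1)\binom{|X|}{2}
\]
gives $e\le\frac{\sqrt{t-1}}{2}\bigl(|Y|\cdot\text{stuff}\bigr)$, maximized at $|X|=|Y|=n/2$ to $\frac{\sqrt{t-1}}{2\sqrt2}n^{3/2}(1+o(1))$. Checking that the degree-sequence sum with decreasing current degrees still gives the full $\binom{d(y)}{2}$, up to an $O(tn)$ error, is the step I expect to need the most care.
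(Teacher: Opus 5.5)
\paragraph{Verdict.} Your final single-orientation argument is essentially the paper's proof, but the optimization over the part sizes is wrong as written and needs the normalization $|X|\le|Y|$.

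\paragraph{What matches the paper.} You delete edges $xy$ for which $y$ has fewer than $t$ neighbors $z$ with $d(\{x,z\})\ge t$. A deletion at current degree $d'(y)$ destroys at least $d'(y)-t$ cherries through light pairs, and each light pair can absorb at most $t-1$ such charges. Summing over the current degrees $d(y),d(y)-1,\dots,1$ at each $y$ gives $\sum_{y}\binom{d(y)-t+1}{2}\le (t-1)\binom{|X|}{2}$. The paper also has a separate vertex-deletion step for $y$ with $d(y)\le t-1$, but those deletions contribute nothing to the count, so omitting it is harmless. The two-orientation detour is unnecessary and can be dropped.

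\paragraph{The step that fails.} Cauchy--Schwarz gives $e(G)^2\le (t-1)|X|^2|Y|+o(n^3)$. Subject to $|X|+|Y|=n$, the product $|X|^2|Y|$ is maximized at $|X|=2n/3$, not at $|X|=|Y|=n/2$. Without further input you only get
\[
e(G)\le\sqrt{4(t-1)/27}\;n^{3/2}\approx 0.385\sqrt{t-1}\,n^{3/2},
\]
which is above the target $\frac{\sqrt{t-1}}{2\sqrt2}n^{3/2}\approx 0.354\sqrt{t-1}\,n^{3/2}$. The fix, which the paper makes at the outset, is to label the parts so that $|X|\le|Y|$. This is legitimate because Lemma~\ref{embedlemma} and the deletion procedure work for either labelling. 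Then $|X|^2|Y|$ is increasing in $|X|$ on $[0,n/2]$, so it is at most $n^3/8$.

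\paragraph{Minor correction.} The step you flagged as delicate is routine. The error in passing from $\binom{d(y)-t+1}{2}$ to $\binom{d(y)}{2}$ is at most $(t-1)d(y)$ per vertex, so $(t-1)e(G)$ in total, not $O(tn)$. It is still negligible, either because $e(G)=o(n^2)$ or by solving the resulting quadratic in $e(G)$ directly, which loses only $O(tn)$ in the final bound.
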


\begin{proof}
Let $G$ be an $n$-vertex bipartite graph with parts $X$ and $Y$ with $|X|\le |Y|$. Let $T$ be a tree on $t$ vertices. Assume that $G$ does not contain $K_2 \mathbin{\square} T$ as a subgraph and set $m= e(G)$. We will construct a new graph via a sequence of deletions of the following types: 

\smallskip
\emph{(Type 1)} If there exists $y\in Y$ with $d(y) \le t-1$, delete $y$ and all its incident edges.

\smallskip
\emph{(Type 2)} If there is no Type~1 deletion available and there exists an edge $xy$ with $x\in X$ and $y\in Y$ such that $y$ has at most $t-1$ neighbors $z$ with $d(\{x,z\})\ge t$, then delete $xy$.

\smallskip
Apply deletions of Type~1 and Type~2 to $G$ until no further deletion is possible, and call the resulting graph $G'$. Any remaining edge satisfies the condition of Lemma~\ref{embedlemma}. Thus, since $G$ contains no $K_2 \mathbin{\square} T$, we have that $G'$ is empty.

For each Type~2 deletion of an edge $xy$, let $d$ be the degree of $y$ immediately before deleting $xy$.  Then among the $d-1$ neighbors of $y$ in $N(y)\setminus \{x\}$, at least $(d-1)-(t-1)=d-t$ vertices $z$ satisfy $d(\{x,z\})\le t-1$. For each Type~2 deletion of an edge $xy$ we record all of the triples $(x,z,y)$ where $d(\{x,z\})\le t-1$. Let $\mathcal{S}$ be the set of these triples. 

Observe that at each Type~2 deletion of an edge $xy$, the codegree of each pair $x$ and $z$ is reduced by exactly one. The initial codegree of a pair $x$ and $z$ (the codegree at the first moment a pair $\{x, z\}$ is recorded) considered is at most $t-1$, thus the total number of times $x$ and $z$ can occur together in a triple from $\mathcal{S}$ (so either as $(x,z,y)$ or $(z,x,y)$) is at most $t-1$. Thus,
\begin{equation}\label{uppereq}
|\mathcal{S}|\le (t-1)\binom{|X|}{2}.
\end{equation}

We will now obtain a lower bound on $|\mathcal{S}|$. Since $G'$ is empty and vertices $y$ are only deleted when their degree is at most $t-1$, it must happen that the degree of each vertex $y$ of degree at least $t$ is reduced from $d_G(y)$ to $t-1$ through Type~2 deletions of edges incident to $y$. We have seen that when $y$ has degree $d$ such a step adds at least $d-t$ new triples to $\mathcal{S}$. Summing, we obtain that the number of triples $(x,z,y)$ involving $y$ with $d_G(y)\ge t$ is at least

\[
(d_G(y)-t) + (d_G(y)-t-1) + \cdots + 0 \;=\; \binom{d_G(y)-t+1}{2},
\]
and $0$ if $d_G(y)\le t-1$. Thus,
\begin{equation} \label{binbound}
|\mathcal{S}| \ge \sum_{y\in Y} \binom{\max\{d_G(y)-t+1,0\}}{2}.
\end{equation}
Now set $a_y\coloneqq \max\{d_G(y)-t+1,0\}$.
Then $\binom{a_y}{2} = \tfrac12(a_y^2-a_y)$, so~\eqref{binbound} implies
\begin{equation}\label{lowerest}
|\mathcal{S}|
\;\ge\;
\frac{1}{2}\sum_{y\in Y} a_y^2 \;-\; \frac{1}{2}\sum_{y\in Y} a_y.
\end{equation}
Clearly $\sum_y a_y \le \sum_y d_G(y) = m$. On the other hand, 
\[
\sum_y a_y \;\ge\; \sum_y d_G(y) - (t-1)|Y| \;=\; m - (t-1)|Y|,
\]
and also $\sum_y a_y \ge 0$. By the Cauchy-Schwarz inequality 
\[
\sum_{y\in Y} a_y^2 \;\ge\; \frac{(\sum_{y\in Y} a_y)^2}{|Y|}
\;\ge\; \frac{(\max\{m-(t-1)|Y|,0\})^2}{|Y|}.
\]
If $m \le (t-1)|Y|$, then $m=O(n)$. So we assume $m>(t-1)|Y|$.  Plugging the above estimates into~\eqref{lowerest} we get
\begin{equation}
\label{lowereq}
|\mathcal{S}|
\;\ge\;
\frac{\bigl(m-(t-1)|Y|\bigr)^2}{2|Y|} \;-\; \frac{m}{2}.
\end{equation}
Combining~\eqref{uppereq} and~\eqref{lowereq} yields
\[
\frac{(m-(t-1)|Y|)^2}{2|Y|} \;-\; \frac{m}{2}
\;\le\;
(t-1)\binom{|X|}{2}.
\]
Using the well-known result that forbidding a bipartite graph implies $m=o(n^2)$ (or the stronger estimate $m=O(n^{\frac{3}{2}})$ from~\cite{bradavc2023turan}) and the fact that $|X|$ and $|Y|$ are at most $n$, we obtain
\[
m^2  \;\le\; (t-1)|Y||X|^2 +o(n^3).
\]
By our assumption that $|X|\le |Y|$, it follows that $|Y||X|^2 \le \frac{n^3}{8}$ and thus
\[
m \;\le\; \frac{\sqrt{t-1}}{2\sqrt{2}}n^{\frac{3}{2}} + o(n^{\frac{3}{2}}).\qedhere
\]
\end{proof}

With a simple modification of the embedding procedure from Lemma~\ref{embedlemma} we can obtain the following stronger lemma.

\begin{lemma} \label{generallemma}
Let $F$ be a bipartite graph with parts $A$ and $B$ constructed in the following way.  Beginning with an edge, apply any sequence of operations of the following type until $|B|=t$: 

\smallskip
(Operation) Take any edge $ab$ with $a\in A$ and $b\in B$ created so far, and add two new vertices $c \in A$, $d \in B$ and the edges $bc$, $cd$ and $ad$.  Then add any number $r\ge 0$  vertices $f \in B$ and the edges $af$, $fc$.  
\smallskip

Let $G$ be a bipartite graph with parts $X$ and $Y$ and at least one edge. Assume that for every edge $xy \in E(G)$ with $x\in X$ and $y\in Y$ the vertex $y$ has at least $t$ neighbors $z$ with $d(\{x,z\}) \ge t$. Then $G$ contains $F$ as a subgraph.
\end{lemma}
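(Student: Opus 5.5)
Following Lemma~\ref{embedlemma}, we embed $F$ greedily, following the order in which it was built: $A$ is mapped into $X$ and $B$ into $Y$. The initial edge goes to any edge $xy$ of $G$. Each operation adds exactly one vertex of $A$, namely $c$, and at least one vertex of $B$, namely $d$ together with the $r$ extra vertices $f$. Since $F$ ends with $|B|=t$, at every stage before the final operation at most $t-1$ vertices of $B$ have been placed. Because $|A|\le|B|$ throughout, at most $t-1$ vertices of $A$ have been placed as well.

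Now suppose the current partial embedding maps $a\mapsto x$ and $b\mapsto y$, and the operation is applied to the edge $ab$. Before the operation, $|B|=s$ with $s\le t-1$, and afterwards $|B|=s+1+r\le t$. By hypothesis, $y$ has at least $t$ neighbours $z$ with $d(\{x,z\})\ge t$. At most $t-1$ of these neighbours lie in the image of $A$, so we can pick one such $z$ that is not yet used and map $c\mapsto z$. This realises the edge $bc$ as $yz$.

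Next we need $1+r$ distinct new vertices of $Y$ that are common neighbours of $x$ and $z$. The vertex $d$ needs adjacency to both $a$ and $c$, and so does each $f$. The set $N(\{x,z\})$ has at least $t$ elements, and at most $s$ of them are already used. This leaves at least $t-s\ge 1+r$ free common neighbours, so we assign $d$ and the $f$'s to distinct ones. Every required edge ($bc$, $cd$, $ad$, $af$, $fc$) is then present in $G$, and the embedding stays injective. By induction on the number of operations, this produces a copy of $F$.

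The main point needing care is the counting. The choice of $z$ needs the bound of $t-1$ on used $X$-vertices, which comes from $|A|\le|B|\le t-1$ before the final step. The choice of the new $Y$-vertices needs exactly $t-s\ge r+1$, which comes from the final count $|B|=t$. No other obstacle is expected, since the hypothesis holds for every edge of $G$, including the newly embedded ones such as $zd'$.
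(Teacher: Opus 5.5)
Your proposal is correct and is essentially the argument the paper has in mind. The paper gives no separate proof, only remarking that Lemma~\ref{generallemma} follows from a simple modification of the greedy embedding in Lemma~\ref{embedlemma}. Your write-up is exactly that modification. Its only new ingredients beyond the $r=0$ case are the two counts: $|A|\le|B|=s\le t-1$, which gives a fresh $z$, and $t-s\ge r+1$, which gives $r+1$ fresh common neighbours of $x$ and $z$.
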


From Lemma~\ref{generallemma} we obtain the following result.

\begin{theorem}\label{generalversion}
Let $F$ be a bipartite graph constructed as in Lemma~\ref{generallemma}. Then 
\[
\ex_{bip}(n,F)\leq \frac{\sqrt{t-1}}{2\sqrt{2}}(1+o(1)) n^{\frac{3}{2}}.
\]
\end{theorem}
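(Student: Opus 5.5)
The plan is to repeat the proof of Theorem~\ref{thm: upper bound of K2 cross Tree Restate} word for word, replacing Lemma~\ref{embedlemma} by Lemma~\ref{generallemma}. The deletion process and the counting of triples never used the structure of $T$. They used only two facts: a nonempty graph in which every edge $xy$ ($x\in X$, $y\in Y$) has at least $t$ neighbors $z$ of $y$ with $d(\{x,z\})\ge t$ contains the forbidden graph, and the threshold in the Type~1 and Type~2 deletions is that same $t$.

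So let $G$ be an $n$-vertex $F$-free bipartite graph with parts $X,Y$, $|X|\le |Y|$, and $m=e(G)$. Apply the Type~1 deletions (vertices $y\in Y$ with $d(y)\le t-1$) and the Type~2 deletions (edges $xy$ where $y$ has at most $t-1$ neighbors $z$ with $d(\{x,z\})\ge t$) until neither applies. The resulting graph $G'$ satisfies the hypothesis of Lemma~\ref{generallemma} whenever it has an edge. Since $G'\subseteq G$ is $F$-free, $G'$ must be empty.

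Record the triples $\mathcal S$ exactly as before. Each Type~2 deletion lowers every relevant codegree by one, and a pair $\{x,z\}$ first enters a triple only once its codegree is at most $t-1$. This gives $|\mathcal S|\le (t-1)\binom{|X|}{2}$. Every $y$ with $d_G(y)\ge t$ must be brought down to degree $t-1$ by Type~2 deletions, which gives
\[
|\mathcal S|\ge \sum_{y\in Y}\binom{\max\{d_G(y)-t+1,0\}}{2}\ge \frac{(m-(t-1)|Y|)^2}{2|Y|}-\frac m2,
\]
by Cauchy--Schwarz, assuming $m>(t-1)|Y|$ (otherwise $m=O(n)$). Since $F$ is bipartite, $m=o(n^2)$. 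Comparing the two bounds gives $m^2\le (t-1)|Y||X|^2+o(n^3)\le \frac{t-1}{8}n^3+o(n^3)$, and hence the claimed bound.

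The only real content is Lemma~\ref{generallemma}, which is stated but not proved here, so I would also check it. Embed $F$ following its construction sequence, keeping $A$-vertices in $X$ and $B$-vertices in $Y$. An operation is applied to an embedded edge $ab$, with images $x_a\in X$ and $y_b\in Y$. At that moment fewer than $t$ vertices of $B$ are used, and at most as many vertices of $A$ as of $B$ have been created so far. The hypothesis gives $t$ neighbors $z$ of $y_b$ with $d(\{x_a,z\})\ge t$, so one of them is unused; take it as the image of $c$. The common neighbors of $x_a$ and $z$ number at least $t$, while the new vertices $d,f_1,\dots,f_r$ together with the already embedded part of $B$ total at most $t$. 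So we can choose distinct unused common neighbors for $d$ and the $f_i$. This bookkeeping, namely that the total size of $B$ is exactly $t$ so the counts never overflow, is the step to verify carefully. Given it, the theorem follows immediately from the argument above.
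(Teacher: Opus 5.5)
Your proposal is correct and is exactly the paper's argument: the proof of Theorem~\ref{thm: upper bound of K2 cross Tree Restate} (Type~1/Type~2 deletions, the bounds $\sum_{y\in Y}\binom{\max\{d_G(y)-t+1,0\}}{2}\le|\mathcal S|\le(t-1)\binom{|X|}{2}$, and $|Y||X|^2\le n^3/8$) is rerun verbatim with Lemma~\ref{generallemma} in place of Lemma~\ref{embedlemma}. Your check of Lemma~\ref{generallemma} is also sound: before each operation $|A|\le|B|\le t-1$, so an unused $z$ and $r+1$ unused common neighbors of $x_a$ and $z$ are available. This supplies the ``simple modification'' that the paper leaves unstated.
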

\begin{proof}
    The proof is exactly the same as the proof of Theorem~\ref{thm: upper bound of K2 cross Tree Restate}, using Lemma~\ref{generallemma} in place of Lemma~\ref{embedlemma}.
\end{proof}

The graphs $K_2 \mathbin{\square} T$ and $K_{2,t}$ satisfy the conditions of the theorem as do many other graphs.  Recall that $\ex_{bip}(n,K_{2,t}) = \frac{\sqrt{t-1}}{2\sqrt{2}}(1+o(1)) n^{\frac{3}{2}}$~\cite{kst,mors1981new}, so the theorem is best possible over this wider class of graphs.

    \section{Proof of Theorem \ref{thm: bipartite upper and lower bound of B2}}\label{sec: Bt bipartite}

        In this section, we will give the proof of Theorem \ref{thm: bipartite upper and lower bound of B2}.
        Recall that the statement of Theorem \ref{thm: bipartite upper and lower bound of B2} is $\ex_{bip} (n, B_2) \leq 0.468 (1+o(1)) n^{\frac{3}{2}}$.
        In fact, for any given $t$, a better upper bound for $\ex_{bip} (n, B_t)$ can be obtained using the following analysis.

        \begin{lemma}\label{lem: B2 free bipartite}
            Let $G$ be a $B_2$-free bipartite graph on $n$ vertices.
            If $\delta(G)\geq cn^{\frac{1}{2}}$ for a constant $c$, then for every $\epsilon>0$ we have
            $$e(G)\leq \max\left\{\sqrt{\frac{4}{27}+2\epsilon},\sqrt{\frac{1}{4}-4c^2\epsilon}\right\}n^{\frac{3}{2}}+o(n^{\frac{3}{2}}).$$
        \end{lemma}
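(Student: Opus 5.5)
The plan is to double count copies of $C_4$, as in Lemma~\ref{lem: with large min degree}. I will combine the lower bound of Lemma~\ref{lem: lower bound of number of C4 for Bt on bip} (with $t=2$) with an upper bound coming from the local structure of $B_2$-free bipartite graphs. I then split into two cases according to whether the number of $C_4$'s is at most or at least roughly $\epsilon n^3$; each case produces one of the two terms in the maximum.

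Let the parts be $V_1,V_2$ with $|V_i|=n_i$, and write $e=e(G)$. For $t=2$ a good set is a pair with codegree at least $3$. Lemma~\ref{lem: lower bound of number of C4 for Bt on bip} gives, for $\{i,j\}=\{1,2\}$,
\[
N_i(C_4,G)\ \ge\ \tfrac12\Bigl(\textstyle\sum_{u\in V_i} d(u)^2-3e-n_j^2\Bigr).
\]
By Cauchy--Schwarz, $\sum_{u\in V_i}d(u)^2\ge e^2/n_i$. For the upper bound, fix an edge $uv$. The graph $G_{uv}$ from the proof of Lemma~\ref{lem: with large min degree} has no two independent edges (Claim~\ref{twoind}). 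Since $G$ is bipartite, $G_{uv}$ has no triangle, so it is a star. Hence $uv$ lies in at most $\max\{d(u),d(v)\}\le d(u)+d(v)-\delta$ copies of $C_4$. Summing over edges,
\[
4\,\#C_4\ \le\ \sum_u d(u)^2-e\delta\ \le\ \sum_u d(u)^2-c\,e\,n^{1/2}.
\]

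\textbf{Case 1: few $C_4$'s.} Suppose $N_i(C_4,G)\le \epsilon n^3$ for a suitable $i$; I would take the index $i$ for which $n_j$ is the larger part. Then $e^2/n_i\le \sum_{V_i}d^2\le n_j^2+2\epsilon n^3+O(e)$. Multiplying by $n_i$ and using $n_in_j^2\le \max_{x+y=n}xy^2=\tfrac{4}{27}n^3$ gives $e^2\le(\tfrac4{27}+2\epsilon)n^3+o(n^3)$. This is the first term.

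\textbf{Case 2: many $C_4$'s,} say $\#C_4\ge\epsilon n^3$. Here the aim is to show that the minimum degree forces a loss of order $c^2\epsilon$ relative to the Reiman-type bound $e^2\le\tfrac14 n^3$, which is the $t=2$ instance of Theorem~\ref{thm: upper bound of K2 cross Tree}. I would run the $K_{2,2}$-type count behind Reiman's bound, comparing $\sum_u\binom{d(u)}2$ with the number of pairs in the opposite part. The count must be weighted by codegrees: pairs with codegree at least $2$ contribute $C_4$'s, and each $C_4$ through an edge $uv$ costs an amount related to $\delta$, by the star bound above. Combining $4\,\#C_4\le\sum d^2-c\,e\,n^{1/2}$ with $\#C_4\ge\epsilon n^3$ and $e\le n^{3/2}$, the minimum-degree term should supply a saving proportional to $c\,e\,n^{1/2}\cdot\epsilon n^3/e^2$. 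I expect this to appear as the $-4c^2\epsilon$ correction.

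I expect Case 2 to be the main obstacle. The issue is converting the hypotheses ``many $C_4$'' and ``large $\delta$'' into a quantitative improvement of exactly the shape $\sqrt{1/4-4c^2\epsilon}$. This probably requires a careful choice of which quantity ($\sum d^2$, or $\sum_{\text{pairs}} d(S)^2$) carries the $\epsilon n^3$ excess. It also requires handling an imbalance between $n_1$ and $n_2$, possibly by applying the argument to the side with the larger $\sum d^2$. Case 1, by contrast, is a routine computation.
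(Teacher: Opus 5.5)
There is a genuine gap: Case~2 is not proved, and the case split is also mis-normalized. By your own star bound, $4\#C_4\le\sum_u d(u)^2\le 2n\,e(G)=o(n^3)$, so the Case~2 hypothesis $\#C_4\ge\epsilon n^3$ never holds for large $n$. In Case~1, multiplying $2\epsilon n^3$ by $n_i$ gives $2\epsilon n^3 n_i$, not $2\epsilon n^3$. The right threshold is $N_1(C_4,G)\le\epsilon n^2$ with $V_1$ the smaller part; with that fix, your Case~1 coincides with the paper's first case. Case~2, however, is only a heuristic. The saving $-e\delta$ in $4\#C_4\le\sum_u d(u)^2-e\delta$ holds unconditionally, and nothing in your sketch turns ``many $C_4$'s'' into an extra loss or explains where the factor $c^2$ comes from. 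Note also that the paper's second-case hypothesis is specifically $N_1(C_4,G)>\epsilon n^2$, i.e.\ many $C_4$'s whose $V_1$-diagonal is good, not merely many $C_4$'s.

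The missing idea is a blocking argument. Suppose $\{u_1,u_2\}\subseteq V_1$ is good and $v\in N(u_1,u_2)$. Then no $C_4$ of $\mathcal{C}_2$ contains $vu_1$ or $vu_2$. Otherwise $u_1\in N(v,w)$ for some good $\{v,w\}$, and a further common neighbor of $v,w$ together with a further common neighbor of $u_1,u_2$ gives a $B_2$ on the edge $u_1v$. So each of these $2d(u_1,u_2)$ edges forfeits its whole allowance $d(v)\ge cn^{1/2}$ in $4N_2(C_4,G)\le\sum_{v\in V_2}d(v)^2$. Moreover, any two $v,v'\in N(u_1,u_2)$ satisfy $N(v)\cap N(v')=\{u_1,u_2\}$, which caps $d(u_1,u_2)$ at about $n_2/(cn^{1/2})$. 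Hence $\epsilon n^2<N_1(C_4,G)=\sum_{\text{good}\,\{u_1,u_2\}}\binom{d(u_1,u_2)}{2}$ forces $\sum d(u_1,u_2)\ge 2c\epsilon n^{5/2}/n_2$, and so $4N_2\le\sum_{V_2}d^2-4c^2\epsilon n^3/n_2$. Lemma~\ref{lem: lower bound of number of C4 for Bt on bip} for $i=2$ and Cauchy--Schwarz then give $e^2\le 2n_1^2n_2-4c^2\epsilon n^3+o(n^3)\le(\frac14-4c^2\epsilon)n^3+o(n^3)$. One factor $c$ comes from $d(v)\ge cn^{1/2}$, the other from the codegree cap.

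Your own ingredients can also be completed by a different route. No $C_4$ has both diagonals good, since a further common neighbor on each side yields a $B_2$; hence $\#C_4\ge N_1+N_2$. Adding Lemma~\ref{lem: lower bound of number of C4 for Bt on bip} for $i=1,2$ to your star bound gives $\sum_u d(u)^2\le 2(n_1^2+n_2^2)-e\delta+12e$. Cauchy--Schwarz and maximizing over $n_1n_2$ then yield $e(G)\le\frac{2}{4+c}(1+o(1))n^{3/2}$ with no case split, and this bound never exceeds the stated maximum. As submitted, however, the proposal contains neither argument.
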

        \begin{proof}
            Let $G$ be a $B_2$-free bipartite graph with parts $V_1$ and $V_2$, where $|V_1|=n_1$, $|V_2|=n_2$ and $n=n_1+n_2$.
            Without loss of generality, assume that $|V_1| \leq |V_2|$.
            Choose an edge $e=v_1v_2$ where $v_i\in V_i$ for $i=1,2$. 
            Let $\mathcal{C}_i$ be the collection of $C_4$ (denoted by $C$) such that $C \cap V_i$ is a good set.
            Then $N_i(C_4, G) = |\mathcal{C}_i|$.
            The number of copies of $C_4$ in $\mathcal{C}_i$ containing $e$ is at most $d(v_i)$.
            Then for all edges $e$ containing $v_i$, we sum up all the $C_4$ in $\mathcal{C}_i$ containing $e$, and have $\sum_{u\in N(v_i)}d(v_i)=d(v_i)^2$.
            Summing over all vertices in $V_i$, each $C_4$ in $\mathcal{C}_i$ is counted four times. Thus, we have
            $$4N_i(C_4, G)\leq \sum_{v\in V_i}d(v)^2.$$
            Now we improve the estimate of the upper bound for $N_i(C_4, G)$.
            Notice that if there exists a good set $\{u_1,u_2\}$ in $V_1$, then for every $v\in N(u_1)\cap N(u_2)$, and every $w\in V_2\setminus \{v\}$ with $\{v,w\}$ good, $\{u_1,u_2\}\cap (N(v)\cap N(w))=\emptyset$.
            Otherwise, there exists a $B_2$ in $G$.
            As a result, there is no $C_4$ in $\mathcal{C}_2$ containing either $\{v, u_1\}$ or $\{v, u_2\}$.
            Thus there are $2\sum_{v\in N(u_1)\cap N(u_2)}d(v)\geq 2cn^{\frac{1}{2}}d(u_1,u_2)$ missing $C_4$ in $\mathcal{C}_2$.
            A fixed edge $vu$ can belong to at most one good set $\{u, u'\}$ with $v \in \{u, u'\}$.
            Otherwise, if $vu$ is in two good sets $\{u, u'\}$ and $\{u, u''\}$, one can choose distinct $w \in N(u, u') \setminus \{v\}$ and $z \in N(u, u'') \setminus \{v\}$, and the $u v u' w$ and $u v u'' z$ share the edge $uv$, yielding a $B_2$.
            Hence, we have
            \begin{equation}\label{eq: bip new uppwer bound of Q2}
                4N_2(C_4, G)\leq \sum_{v\in V_2} d(v)^2- 2cn^{\frac{1}{2}} \sum_{\substack{\{u_1,u_2\}\subseteq V_1, \\\{u_1,u_2\}\text{ is good}}}d(u_1,u_2).
            \end{equation}

            If $N_1(C_4, G)\leq \epsilon n^2$ for a fixed constant $0<\epsilon$, which we will determine later, then by Lemma~\ref{lem: lower bound of number of C4 for Bt on bip}, we have
            $$\sum_{v\in V_1}d(v)^2\leq 3e(G)+n_2^2+2\epsilon n^2.$$
            It implies that
            $$\frac{(e(G))^2}{n_1}\leq \sum_{v\in V_1}d(v)^2\leq 3e(G)+n_2^2+2\epsilon n^2.$$
            Then we have
            $$e(G)\leq \sqrt{n_1n_2^2+2n_1\epsilon n^2}+O(n^{\frac{5}{4}})\leq \sqrt{\frac{4}{27}+2\epsilon}(1+o(1))n^{\frac{3}{2}}.$$
            Then we may assume $N_1(C_4, G)>\epsilon n^2$.
            Note that if $\{u_1,u_2\}$ is good, and $v_1,v_2\in N(u_1)\cap N(u_2)$, then $N(v_1)\cap N(v_2)=\{u_1,u_2\}$.
            Otherwise, there exists a $B_2$ in $G$.
            As a result, $d(u_1,u_2)\leq \frac{n_2}{cn^{\frac{1}{2}}}$ for every good set $\{u_1,u_2\}\subseteq V_1$.
            Notice that
            $$\epsilon n^2<N_1(C_4, G)=\sum_{\substack{\{u_1,u_2\}\subseteq V_1, \\ \{u_1,u_2\}\text{ is good}}}\binom{d(u_1,u_2)}{2}\leq \frac{n_2}{2cn^{\frac{1}{2}}}\sum_{\substack{\{u_1,u_2\}\subseteq V_1, \\  \{u_1,u_2\}\text{ is good}}}d(u_1,u_2).$$
            It implies that
            \begin{equation}\label{eq: bio lower bound of good pairs}
                \sum_{\substack{\{u_1,u_2\}\subseteq V_1, \\ \{u_1,u_2\}\text{ is good}}}d(u_1,u_2)\geq \frac{2c\epsilon n^{\frac{5}{2}}}{n_2}.
            \end{equation}
            Then, by Equation (\ref{eq: bip new uppwer bound of Q2}) and Equation (\ref{eq: bio lower bound of good pairs}), we have
            $$4N_2(C_4, G)\leq \sum_{v\in V_2} d(v)^2- \frac{4c^2\epsilon n^{3}}{n_2}.$$
            Combining with Lemma \ref{lem: lower bound of number of C4 for Bt on bip}, we have
            $$\sum_{v\in V_2}d(v)^2\leq 3e(G)+n_1^2+\frac{1}{2} \left(\sum_{v\in V_2}d(v)^2- \frac{4c^2\epsilon n^{3}}{n_2} \right).$$
            It implies that
            $$e(G)\leq (1+o(1))\sqrt{2n_1^2n_2-4c^2\epsilon n^3}\leq \left(\sqrt{\frac{1}{4}-4c^2\epsilon}\right) (1+ o(1)) n^{\frac{3}{2}},$$
            and the second inequality follows from $n_1 \leq n_2$ and $n_1 + n_2 = n$.
            We complete the proof.
        \end{proof}

        \begin{proof}[Proof of Theorem \ref{thm: bipartite upper and lower bound of B2}]
            Let $G$ be a $B_2$-free bipartite graph.
            Delete vertices with degree less than $cn^{\frac{1}{2}}$.
            If the remaining graph $G'$ contains at most $n^{\frac{1}{2}}$ vertices, then the total number of edges $e(G) \leq \binom{n^{\frac{1}{2}}}{2} + c (n-n^{\frac{1}{2}})n^{\frac{1}{2}} < cn^{\frac{3}{2}} + O(n)$.

            Then we assume that the remaining graph $G'$ contains at least $n' = n^{\frac{1}{2}}$ vertices.
            By Lemma \ref{lem: B2 free bipartite}, we have $e(G') \leq \max\left\{\sqrt{\frac{4}{27}+2\epsilon},\sqrt{\frac{1}{4}-4c^2\epsilon}\right\}(n')^{\frac{3}{2}}+o((n')^{\frac{3}{2}}).$

            If there are positive constants $\epsilon$ and $c$ such that
            $$c \geq \max\left\{ \sqrt{\frac{4}{27}+2\epsilon},\sqrt{\frac{1}{4}-4c^2\epsilon}\right\},$$
            then we have $e(G')\leq c(n')^{\frac{3}{2}}+o((n')^{\frac{3}{2}})$ and
            \begin{equation*}
                e(G) \leq e(G') + c(n - n')n^{\frac{1}{2}} \leq c(n')^{\frac{3}{2}}+o((n')^{\frac{3}{2}}) + c(n - n')n^{\frac{1}{2}} \leq cn^{\frac{3}{2}} + o(n^{\frac{3}{2}}).
            \end{equation*}
            By taking $\epsilon = 0.0354$, we have $c\geq  0.468$, i.e., $e(G) \leq 0.468 (1+o(1)) n^{\frac{3}{2}}$.
        \end{proof}


\section{Acknowledgements}

    The research of Zhao is supported by the China Scholarship Council (No. 202506210250) and the National Natural Science Foundation of China (Grant 12571372).

    The research of Cheng is supported by the National Natural Science Foundation of China (Nos. 12131013 and 12471334), Shaanxi Fundamental Science Research Project for Mathematics and Physics (No. 22JSZ009) and the China Scholarship Council (No. 202406290241).

    The research of Chi is supported by the China Scholarship Council (No. 202406140160).

    The research of Gy\H{o}ri and Tompkins was supported by NKFIH grant K132696.

    The research of Wang is supported by the China Scholarship Council (No. 202506210200) and the National Natural Science Foundation of China (Grant 12571372).
    \bibliographystyle{amsplain}
    \bibliography{references}
\end{document}